\numberwithin{equation}{section}
\def\p{\partial} 
\def\no{\noindent}  
\def\io{{\infty}}
 \def\moo{C^{\io}}
\def\mooc{C^{\io}_{\textit c}}
\def\R{\mathbb R}
\def\poscal#1#2{\langle#1,#2\rangle} 
\def\poi#1#2{\left\{#1,#2\right\}}
\def\norm#1{\Vert#1\Vert}
\def\val#1{\vert#1\vert}
\def\l2{L^2(\R^{n})}
\def\L2{L^2(\R^{2n})}
\def\supp{\operatorname{supp}}
\def\tr#1{{^t}\!#1}
\def\hs{{\hskip15pt}}
\def\vs{\vskip.3cm}
\let\no=\noindent
\let\no=\noindent
\newtheorem{theorem}{Theorem}[section]
\newtheorem{lemma}[theorem]{Lemma}
\newtheorem{prop}[theorem]{Proposition}
\newtheorem{rem}[theorem]{Remark}
\numberwithin{equation}{section}
\begin{document}
\baselineskip=1.2\normalbaselineskip
\title
[Continuation through transversal characteristic hypersurfaces]{Unique continuation through transversal characteristic hypersurfaces} 
\author[\tiny Nicolas Lerner ]
{Nicolas Lerner}
\begin{address}{Nicolas Lerner,
Projet analyse fonctionnelle, Institut de Math\'ematiques de Jussieu,
Universit\'e Pierre-et-Marie-Curie (Paris 6),
4, place Jussieu 75252 Paris cedex, France.}
\email{lerner@math.jussieu.fr}
\end{address}
\date{\today}
\begin{abstract}
We prove a unique continuation result for an ill-posed characteristic  problem.
A model problem of this type occurs in A.D.~Ionescu \& S.~Klainerman article
(Theorem 1.1 in \cite{MR2470908}) and we extend their model-result using only geometric assumptions.
The main tools are Carleman estimates and H\"ormander's pseudo-convexity conditions.
\end{abstract}
\maketitle
\noindent\keywords{{\it Keywords:} Carleman estimates, characteristic Cauchy problem,
pseudo-convexity }
\\
\noindent\keywords{\it AMS classification:} 35A07--35A05--34A40
\vfill
{\color{red}\tableofcontents}
\vs
\vfill
\section{Introduction}
\subsection{Background}\label{sec11}
Cauchy uniqueness for Partial Differential Equations has already a long history
and although we do not intend to revisit the many known results on this topic, we
would like to begin this paper with recalling a few basic facts; 
 since we intend to investigate 
``initial''
hypersurfaces which could be time-like in Lorentzian geometry, we shall refrain to using  a
set of coordinates suggesting that we study an evolution equation with respect to a time
variable.
\par
Let us then consider  a linear differential operator of order $m\ge 1$
on some open subset $\Omega$ of $\R^{n}$
\begin{equation}\label{}
P(x,D_{x})=\sum_{\val \alpha\le m}a_{\alpha}(x) D_{x}^{\alpha},
\end{equation}
and the following differential inequality
\begin{equation}\label{diffineq}
\val{P(x,D_{x})u}\le C\bigl(\val{u(x)}+\dots+\val {\nabla^{m-1} u(x)}\bigr).
\end{equation}
Let us consider an oriented hypersurface $\Sigma$ defined as $\{x\in \Omega, \psi(x)=0\}$ where $\psi$ is a function in  $C^{1}(\Omega;\R)$  such that $d\psi\not=0$ at $\psi=0$.
We shall say that 
the operator $P$ has the Cauchy uniqueness across the oriented hypersurface $\Sigma$ whenever
\begin{equation}\label{cauchyun}
\eqref{diffineq}\text{ holds  in $\Omega$ and $u_{\vert \psi<0}=0$}\Longrightarrow u=0 \text{ near $\Sigma$.}
\end{equation}
Of course some more precise assumptions should be made on the regularity of $u$ and the $a_{\alpha}$ above,
at least for the expression of $a_{\alpha}D_{x}^{\alpha}u$ and \eqref{diffineq}
to make sense;
we shall  go back to these questions later on.
The hypersurface $\Sigma$ will be said non-characteristic with respect to $P$ if
\begin{equation}\label{char}
p(x,d\psi(x))\not=0\quad \text{for $x\in \Sigma$, \quad with $p(x,\xi)=\sum_{\val \alpha=m}a_{\alpha}(x) \xi^{\alpha}$.}
\end{equation}
The function $p$, which is a polynomial in   the variable $\xi$, is  called the principal symbol of the operator $P$.
\subsubsection*{Well-posed problems}
The first case of interest (and certainly the first which was  investigated) is the strictly hyperbolic case,
for which we have  $x=(t,y)\in \R\times \R^{d}$, $t$ is the time-variable, $y$ are the space-variables and the well-named initial hypersurface is given by $\{t=0\}$
and 
$$
\R\ni \tau\mapsto p(t,y;\tau, \eta)\quad \text{has $m$ distinct real roots for $\eta\in \mathbb S^{d-1}$.}
$$
In that case, the quite standard energy method
(see e.g. Chapter 23 in  \cite{MR2304165})
will provide nonetheless uniqueness
but also Hadamard well-posedness, that is conti\-nuous dependence
of the solution at time $t$ with respect to the initial data at time 0, expressed by some inequalities of  type
$$
\norm{u(t)}_{E}\le \norm{u(0)}_{F},
$$
in some appropriate functional spaces $E,F$.
Lax-Mizohata theorems
(see e.g. \cite{MR0097628}, \cite{MR0170112}),
are proving that 
well-posedness
is implying that the above roots should be real, non necessarily distinct:
Well-posedness  implies weak hyperbolicity.
\subsubsection*{Elliptic problems} In 1939, the Swedish mathematician Torsten Carleman raised the following (2D)
question in \cite{MR0000334}:
let us assume that $u$ is a $C^{2}$ function such that
\begin{equation}\label{laplace}
\val{(\Delta u)(x}\le C\bigl(\val{u(x)}+\val{\nabla u(x)}\bigr),\quad u_{\vert x_{1}<0}=0.
\end{equation}
Does that imply that $u$ vanishes near $\{x_{1}=0\}$?
Considering the roots of the polynomial of $\xi_{1}$ for $\xi'=(\xi_{2},\dots,\xi_{n})\in \mathbb S^{n-2}$,
$$
\xi_{1}^{2}+\val{\xi'}^{2}=0\Longleftrightarrow \xi_{1}=\pm i\val {\xi'}\in i\R^{*},
$$
we see that the Cauchy problem for the Laplace operator is ill-posed (i.e. not well-posed),
otherwise the Lax-Mizohata results would imply weak hyperbolicity
(which does not hold).
Carleman's question was 
moving into uncharted territory and his positive answer was indeed seminal by introducing 
a completely new method, strikingly different from the energy method and based upon some weighted inequalities of type
\begin{equation}\label{carleman}
C\norm{e^{-\lambda \phi} Pv}_{L^{2}(\R^{n})}\ge \sum_{0\le j\le m-1}\lambda^{m-\frac12-j}\norm{e^{-\lambda \phi} \nabla ^{j}v}_{L^{2}(\R^{n})},
\end{equation}
for functions $v\in \mooc(\Omega)$, with a well-chosen weight $\phi$, close but different from  the function $\psi$ defining  the hypersurface $\Sigma$.
Applying  Inequality \eqref{carleman} to a regularization $v$ of $\chi u$, where $\chi$ is a cutoff function yields easily that \eqref{laplace}
entails $u=0$.
\par
More generally, it is possible to prove using the same lines that  a second-order elliptic operator $P$
with real $C^{1}$
coefficients has the Cauchy uniqueness across a $C^{2}$ hypersurface in the sense of \eqref{cauchyun}
(A.~Calder\'on \cite{MR0104925}, Chapter 8 in L.~H\"ormander's 1963 book \cite{MR0161012}).
Much more refined results were obtained much later for the Laplace operator
 by D.~Jerison \& C.~Kenig in \cite{MR794370},
simplified and extended by C.~Sogge in \cite{MR1081811}, dealing with $L^{p}-L^{q}$ inequalities of type \eqref{carleman} with singular optimal weights, yielding stronger uniqueness properties for second-order elliptic operators with real coefficients.
\subsubsection*{The analytic dead-end}
Going back to Carleman's question displayed in the previous section,
we note that Holmgren's theorem (see e.g. Theorem 8.6.5 in \cite{MR1996773}) would give a positive answer  for an analytic equation 
replacing the inequality in \eqref{laplace}
such as
$$
\Delta u+\sum_{\val \alpha\le 1} a_{\alpha}(x) D_{x}^{\alpha}u=0,\quad a_{\alpha} \text{ analytic}.
$$
However, Holmgren's assumptions of analyticity are so strong that they are in fact quite instable:
for instance there is no way to tackle the uniqueness problem \eqref{laplace},
nor even to deal with the equation $\Delta u+Vu=0$ with a $\moo$ (and non-analytic) function $V$.
The same remark could be done about Cauchy-Kovalevskaya Theorem.
 Let us quote
L. G\aa rding in \cite{MR0117434}:
{\it It was pointed out very emphatically by Hadamard that it is not natural
to consider only analytic solutions and source functions even for an operator with
analytic coefficients. 
This reduces the interest of the Cauchy-Kovalevskaya
theorem which \dots does not distinguish between
classes of differential operators which have, in fact, very different properties such as the Lapla\-ce operator and the Wave operator.}
\subsubsection*{Calder\'on's and H\"ormander's theorems}
We consider now for simplicity a se\-cond-order differential  operator $P$
with real-valued $C^{1}$ coefficients in the principal part and bounded measurable coefficients for lower order terms.
Also we consider a $C^{2}$ hypersurface $\Sigma$ given by the equation 
$\{x\in \Omega, \psi(x)=0\}, d\psi\not= 0$ at $\Sigma$
which we shall assume to be non-characteristic with respect to $P$ (cf.\eqref{char}).
Let $x_{0}$ be given on $\Sigma$ and let $p$ be the principal symbol of $P$.
\vs
$\bullet$ In the 1959 article \cite{MR0104925}, A.~Calder\'on proved that if the characteristics are simple, then 
uniqueness holds in the sense of  \eqref{cauchyun} in a neighborhood of $x_{0}$;
Calder\'on's assumptions can be written as
\begin{equation}\label{calderon}
p(x_{0},\xi)=0,\  \xi\not=0\Longrightarrow \frac{\p p}{\p \xi}(x_{0},\xi)\cdot \frac{\p\psi}{\p x}(x_{0})\not=0.
\end{equation}
Note that with $H_{p}$ standing for the Hamiltonian vector field\footnote{
The Hamiltonian vector field $H_{p}$ of $p$ is defined by
$$
H_{p}(a)=\underbrace{\poi{p}{a}}_{\substack
{\text{Poisson}\text{ bracket}}}=\frac{\p p}{\p \xi}\cdot \frac{\p a}{\p x}
-\frac{\p a}{\p \xi}\cdot \frac{\p p}{\p x},\quad\text{so that  } H_{p}(\psi)=\frac{\p p}{\p \xi}\cdot d_{x}\psi,$$
and we have also
$H_{p}^{2}(\psi)=H_{p}(H_{p}(\psi))=\poi{p}{\poi{p}{\psi}}$.
}
 of $p$, Calder\'on's
assumption can be written as 
$$
p(x_{0},\xi)=0,\  \xi\not=0\Longrightarrow H_{p}(\psi)(x_{0},\xi)\not=0,
$$
which means that the bicharacteristic curves of $p$ are transversal to the hypersurface $\Sigma$.
\vs
$\bullet$ The above result was extended in 1963 by L.~H\"ormander who proved in \cite{MR0161012}
that uniqueness holds if we assume only
\begin{equation}\label{hormander}
p(x_{0},\xi)=H_{p}(\psi)(x_{0},\xi)=0,\  \xi\not=0\Longrightarrow H_{p}^{2}(\psi)(x_{0},\xi)<0.
\end{equation}
This author gave the name {\it pseudo-convexity }
to that property which indeed means that  bicharacteristic curves tangent to $\Sigma$
have a second-order contact with $\Sigma$ and stay ``below'' $\Sigma$, i.e. in the region where $\psi\le 0$.
\vs
Of course \eqref{calderon} is a stronger assumption than \eqref{hormander} since the latter
does not require anything at $p=0, H_{p}(\psi)\not=0$, implying thus that 
H\"ormander's result contains Calder\'on's.
On the other hand it is important to note that although \eqref{calderon}
is a two-sided result which does not take into account the orientation of $\Sigma$,  Assumption \eqref{hormander}
is dealing with an oriented hypersurface 
which requires that the characteristics tangent to $\Sigma$
should have a second-order contact with $\Sigma$ {\it and} stay ``below'' $\Sigma$  (in the region where $\psi\le 0$).
H\"ormander's uniqueness result under the pseudo-convexity assumption \eqref{hormander}
is using a geometric condition (i.e. independent of the choice of a coordinate system), does not require more than one derivative for the coefficients of the principal part
and provides uniqueness for functions $u$  satisfying a differential inequality \eqref{diffineq}: it can be used to answer to the original Carleman's question above and has some interesting stability properties which are not shared by Holmgren's theorem.
Although the Cauchy problem for $P$ could be ill-posed (it will be the case for elliptic operators with respect to any hypersurface), this result allows to obtain nevertheless uniqueness when the assumptions are satisfied,
say for an elliptic operator with simple characteristics.
The method of proof used by H\"ormander follows Carleman's idea and he  is proving an estimate of type \eqref{carleman}.
\begin{figure}[H]
\scalebox{0.5}{\hskip-55pt\includegraphics{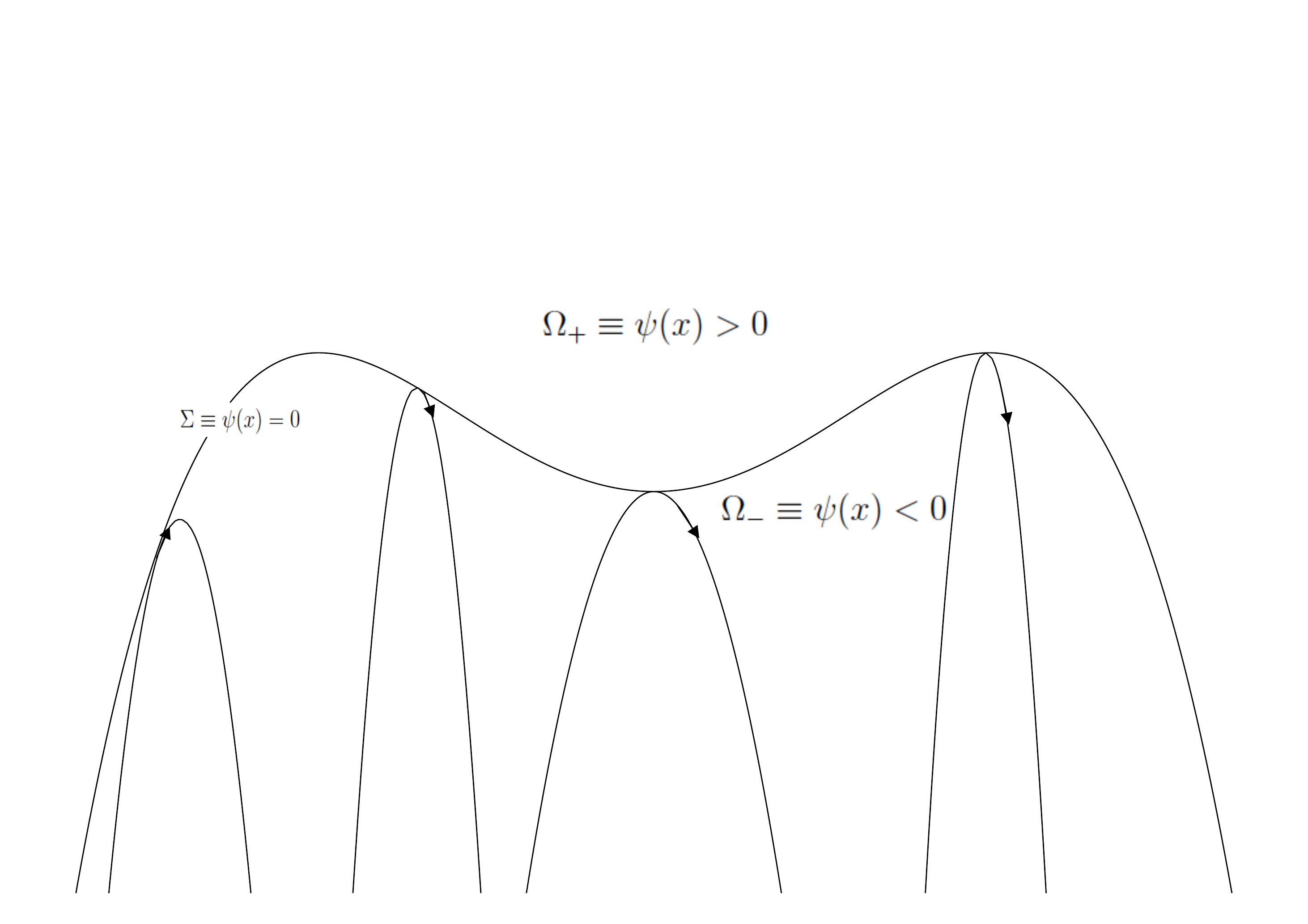}}
\caption{\small\sc Pseudo-convexity: the characteristic curves which are tangent to $\Sigma$
stay in the ``past'', below $\Sigma$.}\label{5.pic.004}
\end{figure}
\vs
\subsubsection*{Counterexamples}
Theorem 8.9.2 in \cite{MR0161012} displays a construction due to P.~Cohen: there exists a smooth non-vanishing
complex vector field in two dimensions, 
$$
\mathcal C=\p_{t}+ib(t,x) \p_{x}, \quad b\in \moo(\R^{2}),
$$
and a $\moo$ function $u$ in $\R^{2}$ with support equal to  $\{t\ge 0\}$ such that $\mathcal C u=0$.
Although the construction of that counterexample is an outstanding achievement,
it turns out that the vector field $\mathcal C$ does not satisfy  the Nirenberg-Treves condition $(P)$
(see e.g. Chapter 26 in \cite{MR1481433})  and thus is not locally solvable.
Note that this operator has simple characteristics as a first-order operator but has  complex-valued coefficients
(see also the study of complex vector fields by F.~Treves \& M. Strauss
 \cite{MR0330739} and the X.~Saint Raymond's article \cite{MR850550}).
There is a version of H\"ormander's theorem for operators with complex coefficients, but it requires a specific additional condition, the so-called principal normality
(see (28.2.8) in  \cite{MR1481433}),
which does not hold for $\mathcal C$.
\par On the other hand,
S. Alinhac and S. Baouendi constructed in \cite{MR1363855} the following  counterexample:
Let us consider the wave operator in 2-space dimension,
$$
\p_{t}^{2}-\p_{x}^{2}-\p_{y}^{2}=\square.
$$
There exists $V,u\in \moo(\R^{3})$ with
\begin{equation}\label{3.ababab}
\supp u=\{y\ge 0\},\quad \square u+Vu=0.
\end{equation}
Note that this operator is with constant coefficients, so that the characteristics are straight lines and the tangential ones are included in the boundary $y=0$, violating the pseudo-convexity hypothesis.
This problem is easily proven to be ill-posed since it is non hyperbolic with respect to the {\it time-like} hypersurface $y=0$.
The construction of this counterexample is a highly non-trivial task and this result appears as the most significant counterexample to Cauchy uniqueness.
We note in particular that this constant coefficient operator
(also of real principal type) is locally solvable, which is not the case of
$\mathcal C$.
As a result,
 the non-uniqueness property \eqref{3.ababab} is  somehow
more interesting
 for an operator having plenty of local solutions.
  The article \cite{MR683803} contains much more information on non-uniqueness results.
  \par
  A recurrent question about the counterexample \eqref{3.ababab}
was for long time if such a phenomenon could hold if $V$ does not depend on the time variable.
A negative answer was given by D. Tataru's \cite{MR1326909}, L. H\"ormander's \cite{MR2033496},  L. Robbiano \& C. Zuily  in \cite{MR1614547} who proved uniqueness for 
$\square +V(t,x,y)$ with respect to $\{y=0\}$
when $V$ is a smooth function depending analytically of the variable $t$.
Several geometric statements are given in that series of articles
which go much beyond this example.
\par
Another outstanding and still open question is linked to the Alinhac-Baouendi
counterexample: is it possible to construct a counterexample \eqref{3.ababab} when $V$ is smooth and {\it real-valued}?
The construction of  \cite{MR1363855} is using a complex-valued potential $V$ and 
one
may  conjecture that \eqref{3.ababab} with $V$ real-valued is impossible,
but a proof of this uniqueness conjecture would certainly require other tools
than Carleman's standard estimates.
\subsection{The Ionescu-Klainerman model problem}\label{secik1}
\subsubsection*{Their statement}
 We consider the wave operator
 in $\R_{t}\times\R^{d}_{x}$ defined by
\begin{equation}\label{alember}
\square=\p_{t}^{2}-\sum_{1\le j\le d}\p_{x_{j}}^{2},
\end{equation}
and let
$ \Omega$ be the open set defined by
\begin{equation}\label{omegao}
\Omega=\{(t,x)\in \R\times \R^{d}, \quad \val{x}> 1+\val t\}.
\end{equation}
\begin{theorem}[Theorem 1.1 in \cite{MR2470908}]\label{thmik0}
 Let $u\in C^{2}(\overline{\Omega})$ vanishing on $\p \Omega$
such that the following pointwise inequality holds in $\Omega$,
 \begin{equation}\label{inequal}
\val{\square u }\le C\bigl(\val u+\val{\p_{t} u}+\val{\nabla_{x} u}\bigr),
\end{equation}
for some fixed constant $C$.
Then $u$ vanishes on $\Omega$.
 \end{theorem}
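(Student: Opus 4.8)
The plan is to realise $\Omega$ as swept out by a one-parameter family of strictly pseudo-convex, non-characteristic hypersurfaces accumulating at $\p\Omega$, to propagate the vanishing of $u$ along this family by means of H\"ormander's uniqueness theorem under the pseudo-convexity condition \eqref{hormander}, and then to treat separately the delicate ``base case'' at the characteristic boundary $\p\Omega$, which is where the real work lies.

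\smallskip\noindent\emph{Step 1: the foliation and the reduction.} On $\overline\Omega$ the function $\Phi(t,x)=(\val x-1)^{2}-t^{2}$ is smooth (there $\val x\ge1$), it is positive on $\Omega$, and it vanishes exactly on $\p\Omega$; its level sets $\{\Phi=s\}$, $s>0$, foliate $\Omega$ and degenerate onto $\p\Omega$ as $s\downarrow0$. With $p(\tau,\xi)=\val{\xi}^{2}-\tau^{2}$ the principal symbol of $\square$ (see \eqref{alember}), one has $p(d\Phi)=4\Phi$, so every leaf $\{\Phi=s\}$ with $s>0$ is non-characteristic, while $\p\Omega$ is characteristic. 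Set $E=\{s>0:\ u\equiv0\text{ on }\Omega\cap\{\Phi<s\}\}$. Since $\Omega\cap\{\Phi=s\}\cap\{\val t\le T\}$ is compact for every $T$, once we know $E\neq\varnothing$ (Step 3) and that $E$ is open and closed in $(0,+\infty)$ (Step 2), a standard continuity/connectedness argument carried out in each slab $\{\val t<T\}$ and then $T\to\infty$ gives $u\equiv0$ on $\Omega$.

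\smallskip\noindent\emph{Step 2: pseudo-convexity of the leaves.} Fix $s>0$ and put $\psi=\Phi-s$, so that $\{\psi<0\}=\{\Phi<s\}$ is the region where $u$ is already known to vanish. With $H_{p}=-2\tau\p_{t}+2\xi\cdot\p_{x}$, $\omega=x/\val x$, and $\xi_{\perp}=\xi-(\xi\cdot\omega)\omega$, a direct computation gives
$$
H_{p}\psi=4\tau t+4(\val x-1)(\xi\cdot\omega),\qquad
H_{p}^{2}\psi=8\bigl(-\tau^{2}+(\xi\cdot\omega)^{2}\bigr)+\frac{8(\val x-1)}{\val x}\,\val{\xi_{\perp}}^{2}.
$$
On $\{p=0\}$ one has $\tau^{2}=(\xi\cdot\omega)^{2}+\val{\xi_{\perp}}^{2}$, hence on $\{p=0\}\cap\{H_{p}\psi=0\}$
$$
H_{p}^{2}\psi=-\frac{8}{\val x}\,\val{\xi_{\perp}}^{2}\le0 ,
$$
and if the right-hand side vanishes then $\xi_{\perp}=0$; then $p=0$ forces $\tau=\pm(\xi\cdot\omega)\neq0$ (because $(\tau,\xi)\neq0$) and $H_{p}\psi=0$ forces $\val x=1\mp t$, i.e. $\Phi=0$, which is impossible on $\{\Phi=s\}$. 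Thus the strict pseudo-convexity \eqref{hormander} holds at every point of every leaf $\{\Phi=s\}$, $s>0$, and H\"ormander's theorem yields: if $u$ satisfies \eqref{inequal} and vanishes in $\{\Phi<s\}$ near a point of $\{\Phi=s\}$, then $u$ vanishes near that point. Openness of $E$ follows, and closedness is immediate by continuity.

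\smallskip\noindent\emph{Step 3: the base case, and the main obstacle.} This is where $\p\Omega$ being characteristic bites: the computation of Step 2 degenerates to $H_{p}^{2}\psi=0$ along radial cofrequencies over $\p\Omega$, so the classical theorem does not apply there. I would first exploit the transport structure of $\square$ along the characteristic cones. In the null coordinates $v=\val x-t$, $w=\val x+t$ (valid on $\overline\Omega$, where $\val x\ge1$) we have $\Omega=\{v>1,\ w>1\}$, $\p\Omega=\{v=1\}\cup\{w=1\}$ with edge $\{v=w=1\}=\{t=0,\ \val x=1\}$, and
$$
\square=-4\p_{v}\p_{w}-\frac{d-1}{\val x}(\p_{v}+\p_{w})-\val x^{-2}\Delta_{\mathbb S^{d-1}} .
$$
Since $u\equiv0$ on $\{v=1\}$, its tangential derivatives vanish there, and restricting \eqref{inequal} to $\{v=1\}$ yields, along each null generator $\{v=1,\ \omega=\mathrm{const}\}$, the differential inequality $\val{\p_{w}(\p_{v}u)+\tfrac{d-1}{4\val x}\p_{v}u}\le C\val{\p_{v}u}$; as $\p_{v}u=0$ at the edge (a tangential derivative of $u\equiv0$ on $\{w=1\}$), Gr\"onwall forces $\p_{v}u\equiv0$ on $\{v=1\}$, and symmetrically $\p_{w}u\equiv0$ on $\{w=1\}$. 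Hence $u$ and all its first derivatives vanish on $\p\Omega$, so the extension of $u$ by $0$ across $\p\Omega$ is $C^{1}$ with bounded second derivatives and still satisfies \eqref{inequal} almost everywhere. The remaining --- and hardest --- point is that the characteristic surface does not determine $\p_{v}^{2}u$, so pure propagation cannot close: one must establish a Carleman estimate holding uniformly up to $\p\Omega$, with a weight built from a suitably convexified function of $\Phi$, which recovers enough sub-ellipticity in spite of the degeneracy of \eqref{hormander} on $\p\Omega$ --- possible precisely because that degeneracy sits over the radial null directions contained in $\p\Omega$, which are transversal to every nearby leaf $\{\Phi=s\}$. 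Applied to a cutoff of the extended $u$, such an estimate would give $u\equiv0$ on $\Omega\cap\{\Phi<\varepsilon\}$, i.e. $\varepsilon\in E$, which completes the argument; this last Carleman inequality at the characteristic boundary is the crux of the whole proof.
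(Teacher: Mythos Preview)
Your Steps 1 and 2 are correct: the foliation by $\{\Phi=s\}$ and the pseudo-convexity computation are fine, and they would propagate vanishing once a base case is established. Your Gr\"onwall argument in Step 3 is also a valid route to showing that the extension $v=u\mathbf 1_{\overline\Omega}$ satisfies the differential inequality in a full neighbourhood of the edge, but it is different from the paper's: the paper observes (Lemmas \ref{lem111}--\ref{lem23}) that since \emph{both} $\Sigma_\pm$ are characteristic, in adapted coordinates $(y_1,y_2,y_3)=(\phi_+,\phi_-,\cdot)$ the principal part has no $\p_{y_1}^2$ or $\p_{y_2}^2$ term, so that every second derivative of $V=H(y_1)H(y_2)U$ actually appearing in $PV$ equals $H(y_1)H(y_2)$ times the corresponding derivative of $U$ --- a purely algebraic fact requiring no transport equation. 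Your Gr\"onwall gives more (namely $\nabla u=0$ on $\p\Omega$, hence $v\in C^1$ with $\nabla^2 v\in L^\infty$) but relies on the explicit radial structure of the model.

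The genuine gap is in the last paragraph of Step 3. You leave the ``Carleman estimate uniform up to $\p\Omega$'' as an assertion, calling it the crux --- but it is neither proven nor needed. Once $v$ satisfies the inequality in a \emph{full open} neighbourhood of the edge and is supported in $\overline\Omega$, you are in the standard situation and only need a single pseudo-convex hypersurface with $\overline\Omega$ on its positive side near the edge. The paper's choice (Lemma \ref{lem26}, Proposition \ref{pro210}) is $\psi=\psi_1-\lambda\psi_0^2=(\val x-1)-\lambda t^2$, where $\psi_1=\tfrac12(\phi_++\phi_-)$ and $\psi_0=\tfrac12(\phi_--\phi_+)$: this hypersurface is smooth through the edge, \emph{non-characteristic} there (since $p(d\psi)=1$ at $t=0$), and strictly pseudo-convex for $\lambda>1/2$ (at $t=0$, $p=H_p\psi=0$ forces $\xi\cdot\omega=0$, whence $H_p^2\psi=(4-8\lambda)\tau^2<0$). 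Since near the edge $\overline\Omega\subset\{\psi\ge0\}$ (Lemma \ref{lem28}: $\val x-1\ge\val t\ge\lambda t^2$ for $\val t<1/\lambda$), the \emph{classical} H\"ormander theorem applied to $v$ gives $v=0$ near the edge, with no degeneracy and no uniformity issue. Your leaves $\{\Phi=s\}$ are level sets of $\psi_1^2-\psi_0^2$; the paper's $\psi_1-\lambda\psi_0^2$ is not a function of $\Phi$, and it is precisely this extra freedom that produces a smooth pseudo-convex hypersurface through the corner of $\p\Omega$. Restricting yourself to ``convexified functions of $\Phi$'' is what makes the base case look hard.
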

 This result contains several interesting features: in the first place, the function $u$ is defined only on $\overline \Omega$ and the equation holds in $\Omega$, whose boundary is characteristic for $\square$.
 Also $u$ is only assumed to vanish on $\p\Omega$ and no vanishing of any first derivative is required although the operator is second-order.
 \subsubsection*{Comments}
 Since we intend to provide a more general statement with an invariant hypothesis, we start with a few comments on the above result.
 First of all, we note that the boundary $\p\Omega$ is the union of two {\it transversa}l characteristic
 hypersurfaces $\Sigma_{+}, \Sigma_{-}$ with 
 $$
 \Sigma_{+}=\{(t,x), \val x=1+t, t\ge 0\},\quad  \Sigma_{-}=\{(t,x), \val x=1-t, t\le 0\},
 $$
 and 
 $
 \Omega=\{(t,x), \val x>1+t \text{ and } \val x>1-t \}.
 $
 Moreover, since the function $u$ belongs to $C^{2}(\overline \Omega)$, there exists $\widetilde u\in C^{2}(\R^{1+d})$
 such that 
 $\widetilde u_{\vert \overline \Omega}=u$.
 We can now take advantage of the fact that the boundary of $\Omega$ is the union of two characteristic hypersurfaces on which $u$ vanishes to prove that 
 $$
 v=\widetilde u\mathbf 1_{\overline\Omega}
 $$
 does satisfy the  differential inequality \eqref{inequal}
 (and is supported in $\overline \Omega$).
 It is then enough to find a pseudo-convex hypersurface with equation 
 $\{\psi=0\}$
  whose epigraph $\{\psi\ge 0\}$
  contains $\overline \Omega$
 to apply (a slight modification) of H\"ormander's  uniqueness theorem under a pseudo-convexity assumption
 to obtain the result.
 In particular, there is no need for a cutoff function supported inside $\Omega$ as in 
 \cite{MR2470908}.
 The details of our arguments are given below, but we hope that 
 these short indications could convince the reader that a geometric statement is at hand.
\subsubsection*{The characteristic Cauchy problem}
In Section \ref{sec11}, we gave results concerning the non-characteristic Cauchy problem 
for second-order operators with real coefficients.
The main reason for these restrictions is that the pseudo-convexity hypothesis
\eqref{hormander}
has a very simple geometric formulation in that framework.
However, uniqueness results and pseudo-convexity hypotheses
can be expressed even for a characteristic hypersurface (and higher order operators).
We may nevertheless say that, generically, characteristic problems do not have uniqueness
(see e.g. Theorem 5.2.1 in \cite{MR0161012})
and that the pseudo-convexity assumption (see (5.3.11) in \cite{MR0161012}) does not hold in the  model case
above neither for $\Sigma_{+}$ nor for $\Sigma_{-}$.
\par
An interesting phenomenon unraveled by Theorem \ref{thmik0} is that,
although there is no uniqueness across any of the characteristic hypersurfaces $\Sigma_{\pm }$, the fact that
the solution of the differential inequality is vanishing on the boundary of $\Omega$
and that  $\Sigma_{+}, \Sigma_{-}$ are intersecting transversally
is indeed producing  a uniqueness result.
\vs
\subsection{Statement of our result}
We consider a second-order differential operator $P$ in an open set $\mathcal U$
of $\R^{n}$ ($n\ge 3$)
with real principal symbol
\begin{equation}\label{operator}
p(x,\xi)=\poscal{Q(x)\xi}{\xi},
\end{equation}
where
$Q(x)$ is a real symmetric matrix, a $C^{1}$ function of $x$, with signature $(n-1,1)$,
i.e. with $(n-1)$ positive eigenvalues, 1 negative eigenvalue.
\par
Note that it is in particular the case of the Wave operator in a Lorentzian manifold
(the standard wave equation in $\R^{1+d}$ is
$c^{-2}\p_{t}^{2}-\Delta_{y}$ whose symbol is $-c^{-2}\tau^{2}+\val\eta^{2}$, indeed a quadratic form with signature 
$(n-1,1)$).
We assume also that the coefficients of the lower order terms in $P$ are bounded measurable.
\par\no
$\bullet$ 
Let $\phi_{+}, \phi_{-}:\mathcal U:\rightarrow \R$ be two $C^{2}$ functions such that 
 \begin{align}
 &d\phi_{+}\not=0 \text{ at }\phi_{+}=0,\qquad d\phi_{-}\not=0 \text{ at }\phi_{-}=0, \label{manifold}\\
&d\phi_{+}\wedge d\phi_{-}\not=0 \text{ at $\phi_{+}=\phi_{-}=0$}.\label{transverse}
\end{align}
$\bullet$ We define the open set 
\begin{equation}\label{omega}
\Omega=\{x\in \mathcal U,\ \phi_{+}(x)>0 \text{ and }\phi_{-}(x) >0\},
\end{equation}
and we have from the above assumptions
\begin{multline*}
\p\Omega=\{x\in \mathcal U,\ \phi_{+}(x)=0 \text{ and }\phi_{-}(x) \ge 0\}\\\cup\{x\in \mathcal U,\ \phi_{-}(x)=0 \text{ and }\phi_{+}(x) \ge 0\}.
\end{multline*}
$\bullet$ We shall also assume that the hypersurfaces $\Sigma_{+},\Sigma_{-}$ defined by 
\begin{equation}\label{hypersurfaces}
\Sigma_{\pm}=\{x\in \mathcal U, \phi_{\pm}(x)=0\},
\end{equation}
are both characteristic, i.e.
\begin{equation}\label{bothcar}
\forall x\in \Sigma_{+}, \poscal{Q(x)d\phi_{+}(x)}{d\phi_{+}(x)}=0,\quad
\forall x\in \Sigma_{-}, \poscal{Q(x)d\phi_{-}(x)}{d\phi_{-}(x)}=0.
\end{equation}
$\bullet$ Moreover, we shall assume that
\begin{equation}\label{sign}
\forall x\in \Sigma_{+}\cap \Sigma_{-},\quad \poscal{Q(x)d\phi_{+}(x)}{d\phi_{-}(x)}>0.
\end{equation}
Note that these assumptions hold in the model case of\  Section \ref{secik1}: with $$\phi_{\pm}=\val y-1\mp t, \quad \mathcal U=\{
(t,y)\in \R\times (\R^{d}\backslash\{0\})
\},$$
we have indeed for $y\not=0$ (which holds at $ \Sigma_{+}\cap \Sigma_{-}$ since there $t=0, \val{y}=1$), 
$$
d\phi_{+}\wedge d\phi_{-}=\bigl(\frac{y}{\val y}\cdot dy-dt\bigr)\wedge \bigl(\frac{y}{\val y}\cdot dy+dt\bigr)
=2\frac{y}{\val y}\cdot dy\wedge dt\not=0,
$$
and for this model-case we have 
$$
 \poscal{Q(x)d\phi_{+}(x)}{d\phi_{-}(x)}=-\frac{\p \phi_{+}}{\p t}   \frac{\p \phi_{-}}{\p t}+
 \frac{\p \phi_{+}}{\p y}\cdot \frac{\p \phi_{-}}{\p y}=1+\frac{y}{\val y}\cdot \frac{y}{\val y}=2.
$$
We are ready to state our unique continuation result.
\begin{theorem}\label{lerner} Let $\mathcal U, P, Q,  \Omega, \Sigma_{\pm},$ as above in 
\eqref{operator}---\eqref{sign}.
Let $u$ be a function in  $C^{2}(\overline \Omega)$ 
such that,
\begin{align}
 &\exists C\ge 0, \forall x\in \Omega,\quad\val{(Pu)(x)}\le C\bigl(\val{(\nabla_{x}u)(x)}+\val{u(x)}\bigr),\label{diff++}\\
 &u_{\vert \p\Omega}=0.\label{119}
\end{align}
 Then $u$ vanishes in a neighborhood of $\Sigma_{+}\cap \Sigma_{-}$.
\end{theorem}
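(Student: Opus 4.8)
The plan is to carry out, near each point of $\Sigma_+\cap\Sigma_-$, the reduction outlined in Section~\ref{secik1} and then to run a Carleman estimate attached to a pseudo-convex weight adapted to the corner. Fix $x_0\in\Sigma_+\cap\Sigma_-$. Since $u\in C^2(\overline\Omega)$ and, near $x_0$, $\overline\Omega$ coincides with $\{\phi_+\ge0,\ \phi_-\ge0\}$ — which by \eqref{manifold}--\eqref{transverse} is the image under a $C^2$ diffeomorphism of a quadrant $\{y_+\ge0,\ y_-\ge0\}\times\R^{n-2}$ — there is $\widetilde u\in C^2$ near $x_0$ with $\widetilde u_{|\overline\Omega}=u$. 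Set $v=\widetilde u\,\ind{\overline\Omega}$. From \eqref{119} the Dirac terms $\widetilde u\,\delta(\phi_\pm)$ occurring in $\nabla v$ vanish, so $\nabla v=(\nabla\widetilde u)\,\ind\Omega\in L^\infty_{\mathrm{loc}}$; differentiating once more, the singular part of $Pv$ (a sum of surface measures on $\Sigma_\pm$) carries the coefficient $\poscal{Q(x)\nabla\widetilde u}{d\phi_\pm}$, and this vanishes on $\Sigma_\pm\cap\overline\Omega$ because there $\nabla\widetilde u$ is proportional to $d\phi_\pm$ (since $\widetilde u=u=0$ on $\Sigma_\pm\cap\overline\Omega$, which near $x_0$ contains $\Sigma_+\cap\Sigma_-$) while $Q\,d\phi_\pm$ is tangent to $\Sigma_\pm$ by the characteristic hypothesis \eqref{bothcar}; the transversality \eqref{transverse} prevents any surface measure supported on $\Sigma_+\cap\Sigma_-$ from appearing. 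Hence $Pv=(Pu)\,\ind\Omega$ in $\mathcal D'$ near $x_0$, so $v\in H^1_{\mathrm{loc}}$ is supported in $\overline\Omega$ and satisfies a differential inequality of type \eqref{diffineq} with $m=2$.

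I then build the weight. Using $\phi_+,\phi_-$ together with completing coordinates $y''$, pick a $C^2$ chart $y=(y_+,y_-,y'')$ vanishing at $x_0$ with $y_\pm=\phi_\pm$, so that $\overline\Omega=\{y_+\ge0,\ y_-\ge0\}$ near $x_0$. By \eqref{bothcar} and \eqref{sign} the bilinear form $\poscal{Q(x_0)\cdot}{\cdot}$ restricted to $\mathrm{span}(d\phi_+,d\phi_-)$ is nondegenerate of signature $(1,1)$; its $Q(x_0)$-orthogonal complement is therefore positive definite, and, choosing $y''$ along it and rescaling, we may assume $p(x_0,\eta)=2c\,\eta_+\eta_-+|\eta''|^2$ with $c=\poscal{Q(x_0)d\phi_+}{d\phi_-}>0$. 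For a large constant $A>0$ put
\[
\psi(y)=y_++y_--\tfrac A2\bigl(y_+^2+y_-^2\bigr).
\]
Then $\psi(x_0)=0$, $d\psi(x_0)=(1,1,0)\neq0$, and on $\overline\Omega$ one has $\psi\ge y_+(1-\tfrac A2 y_+)+y_-(1-\tfrac A2 y_-)\ge0$ once $|y|<2/A$; thus $\supp v\subset\{\psi\ge0\}$ near $x_0$. Moreover $p(x_0,d\psi(x_0))=2c>0$, so $\{\psi=0\}$ is non-characteristic at $x_0$.

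It remains to check H\"ormander's pseudo-convexity condition \eqref{hormander} for $\psi$ at $x_0$. The bicharacteristics of $p$ tangent to $\{\psi=0\}$ at $x_0$ are the $\xi\neq0$ with $p(x_0,\xi)=0$ and $H_p\psi(x_0,\xi)=2c(\xi_++\xi_-)=0$; writing $\xi_-=t\neq0$ these are $\xi_+=-t$, $|\xi''|^2=2ct^2$. Since $\psi$ depends on $x$ only, $H_p\psi=2\poscal{Q(x)\xi}{\nabla\psi(x)}$, and a short computation gives $H_p^2\psi(x_0,\xi)=\poscal{\psi''(x_0)\,\p_\xi p(x_0,\xi)}{\p_\xi p(x_0,\xi)}+\Phi(\xi)$, where $\Phi$ is a quadratic form in $\xi$ built only from $dQ(x_0)$ and $d\psi(x_0)$, in particular independent of $\psi''(x_0)$ and hence of $A$. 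With $\p_\xi p(x_0,\xi)=2(c\xi_-,c\xi_+,\xi'')$ and $\psi''(x_0)=\mathrm{diag}(-A,-A,0)$, on the tangent set this becomes $-8Ac^2t^2+\widetilde\Phi\,t^2$ with $\widetilde\Phi$ bounded as $\xi$ runs over the tangent set modulo dilations, so choosing $A$ large yields $H_p^2\psi(x_0,\xi)<0$ there, i.e. \eqref{hormander}. One is then in position to apply the (slightly modified) version of H\"ormander's uniqueness theorem under a pseudo-convexity assumption discussed in Section~\ref{secik1}: a Carleman estimate of type \eqref{carleman} with $m=2$ and weight $\psi$ holds on a small ball $V\ni x_0$, is applied to a regularization of a cutoff of $v$ (the commutator terms being localized away from $x_0$ and absorbed, together with the constant of \eqref{diff++}, by a large Carleman parameter), and, since $\supp v\subset\{\psi\ge0\}$, it forces $v=0$, hence $u=0$, near $x_0$ — with no need for a cutoff interior to $\Omega$. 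As $\Sigma_+\cap\Sigma_-$ is covered by such neighbourhoods, $u$ vanishes near it.

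The only step with genuine content is the pseudo-convexity verification, and the point is that the admissible conormals are forced on us: for $\psi$ to vanish at $x_0$ with $\{\psi\ge0\}\supset\overline\Omega$ near $x_0$, $d\psi(x_0)$ must be a nonnegative combination $a\,d\phi_++b\,d\phi_-$, and for $\{\psi=0\}$ to be non-characteristic at $x_0$ one moreover needs $a,b>0$; but then $p(x_0,d\psi(x_0))=2ab\,\poscal{Q(x_0)d\phi_+}{d\phi_-}$ is positive \emph{exactly because of} \eqref{sign}. This is what makes $\{\psi=0\}$ non-characteristic and leaves the Hessian of $\psi$ in the $y_+,y_-$ directions unconstrained, so that a large convex correction $-\tfrac A2(y_+^2+y_-^2)$ can be added to push $H_p^2\psi$ strictly negative on the tangent bicharacteristics, overcoming the lower-order term $\Phi$ that is present only because the principal coefficients are $C^1$. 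Had \eqref{sign} failed, every conormal lying above $\overline\Omega$ would be characteristic or worse and no pseudo-convex weight would be available, consistently with the fact that uniqueness holds across neither $\Sigma_+$ nor $\Sigma_-$ separately.
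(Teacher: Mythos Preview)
Your proof is correct and follows the same overall strategy as the paper: extend $u$ by zero across $\partial\Omega$, use the characteristic hypothesis \eqref{bothcar} to kill the surface layers so that $v=\widetilde u\,\mathbf 1_{\overline\Omega}$ satisfies a differential inequality near $x_0$, then exhibit a non-characteristic pseudo-convex hypersurface through $x_0$ with $\overline\Omega$ on its nonnegative side, and conclude via Carleman estimates together with a Friedrichs-type regularization (needed because one only has $v\in H^1_{\mathrm{loc}}$, $Pv\in L^2_{\mathrm{loc}}$). The only differences are cosmetic: you take the weight $\phi_++\phi_--\tfrac A2(\phi_+^2+\phi_-^2)$ and verify \eqref{hormander} by a direct computation in coordinates diagonalizing $Q(x_0)$, whereas the paper uses $\psi_1-\lambda\psi_0^2=\tfrac12(\phi_++\phi_-)-\tfrac\lambda4(\phi_--\phi_+)^2$ and packages the same verification through the signature lemma (Lemma~\ref{lem29}).
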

\section{Proof of a differential inequality}
\subsection{Extending the solution}
Our function $u$ in Theorem \ref{lerner} belongs to $C^{2}(\overline\Omega)$ and thus there exists a function $\widetilde{u}\in C^{2}(\mathcal U)$ such that 
$$
\widetilde{u}_{\vert\overline\Omega}=u,\quad
\widetilde{u}=0\text{ at $\p\Omega$.}
$$
We may thus define a function $v$ as a function in $L^{\io}_{loc}(\mathcal U)$  by
\begin{equation}\label{functionv}
v(x)=\mathbf 1_{\overline\Omega}(x) \widetilde{u}(x), 
\end{equation}
and we have of course that 
\begin{equation}\label{}
\supp v\subset \overline \Omega,\quad v_{\vert\overline{\Omega}}=u.
\end{equation}
We claim now that $v$, which is defined ``globally'' in $\mathcal U$, does satisfy some differential inequality, at least in a neighborhood of $\Sigma_{+}\cap \Sigma_{-}$.
The sequel of this subsection is actually devoted
to the proof of such a differential inequality.
\vs
Taking $$(\underbrace{\phi_{+}}_{y_{1}}, \underbrace{\phi_{-}}_{y_{2}}, \underbrace{y_{3}}_{\in \R^{n-2}}),$$ as $C^{2}$ coordinates near a point $x_{0}$ of $\Sigma_{+}\cap \Sigma_{-}$,
we may consider there $\widetilde{u}$ as a $C^{2}$ function $\tilde u_{\kappa}(y_{1}, y_{2}, y_{3})$ defined by
$$
\tilde u_{\kappa}(y)= \widetilde{u}(\kappa(y)),
$$
where  $\kappa$ is a local $C^{2}$
diffeomorphism from $(-1,1)^{n}$ onto a neighborhood $\mathcal U_{0}$
of $x_{0}$, 
\begin{align*}
\tilde u_{\kappa}(0, y_{2},y_{3})=0\text{ \quad for $y_{2}\ge 0,$}\\
\tilde u_{\kappa}(y_{1}, 0,y_{3})=0\text{ \quad for $y_{1}\ge 0$}.
\end{align*}
We find that the principal symbol of the operator $P$ may be then written
as
$$
p_{\kappa}(y,\eta)=p\bigl(\kappa(y), \tr\kappa'(y)^{-1}\eta\bigr)=\poscal{Q(\kappa(y))\tr\kappa'(y)^{-1}\eta}{\tr \kappa'(y)^{-1}\eta},
$$ 
so that defining the symmetric $n\times n$ matrix 
$$
Q_{\kappa}(y)=\kappa'(y)^{-1} Q(\kappa(y))\tr\kappa'(y)^{-1}=\bigl(b_{jk}(y)\bigr)_{1\le j,k\le n},
$$
the fact that $\Sigma_{\pm}$ are characteristic may be expressed  by
$$
b_{11}(y)=b_{22}(y)=0,\quad \max_{1\le j\le n}\val{ y_{j}}<1.
$$
Note that since $Q$ is assumed to be $C^{1}$ and $\kappa$ is $C^{2}$ we still have the $C^{1}$ regularity for $Q_{\kappa}$.
As a result, the principal part $p_{\kappa}(y, D_{y})$ can be written (near 0)
as
\begin{multline}
-p_{\kappa}(y, D_{y})
= 2 b_{1,2}(y) \frac{\p^{2}}{\p y_{1}\p y_{2}} +\sum_{j\ge 3}2 b_{1,j}(y)
 \frac{\p^{2}}{\p y_{1}\p y_{j}} 
 +\sum_{j\ge 3} 2b_{2,j}(y)
 \frac{\p^{2}}{\p y_{2}\p y_{j}} 
 \\+\sum_{3\le j, k\le n}b_{j,k}(y)
 \frac{\p^{2}}{\p y_{j}\p y_{k}}. 
\end{multline}
Also we note that, with $\nu=\kappa^{-1}$ and $w\in C^{1}(\mathcal U_{0})$,  we have with $x=\kappa(y), y=\nu(x)$,
\begin{equation}\label{lot}
\frac{\p w}{\p x_{j}}=\sum_{1\le k\le n}\underbrace{\frac{\p y_{k}}{\p x_{j}}}_{\in C^{1}(\mathcal U_{0})}\frac{\p w_{\kappa} }{\p y_{k}},\qquad w_{\kappa}=w\circ\kappa.
\end{equation}
Let us now define with $H$ standing for indicator function  of $\R_{+}$ (Heaviside function),
$$
v_{\kappa}(y)=H(y_{1}) H(y_{2}) \tilde u_{\kappa}(y_{1}, y_{2}, y_{3}).
$$
\begin{lemma}\label{lem111}
Let $U$ be a $C^{2}$ real-valued function defined 
for $$(y_{1}, y_{2}, y_{3})\in (-1,1)\times (-1,1)\times (-1,1)^{n-2},$$
 and such that 
 \begin{align}
U(0, y_{2},y_{3})=0\text{ \quad for $y_{2}\ge 0,$}\label{0001}\\
U(y_{1}, 0,y_{3})=0\text{ \quad for $y_{1}\ge 0$}.\label{0002}
\end{align}
Then defining
$
V(y)=H(y_{1}) H(y_{2}) U(y_{1}, y_{2}, y_{3}),
$
we have 
\begin{align}
&\frac{\p V}{\p y_{1}}=H(y_{1})H(y_{2})\frac{\p U}{\p y_{1}},\quad 
\frac{\p V}{\p y_{2}}=H(y_{1})H(y_{2})\frac{\p U}{\p y_{2}},\label{111}\\
&\frac{\p^{2} V}{\p y_{1}\p y_{2}}=H(y_{1})H(y_{2})\frac{\p^{2} U}{\p y_{1}\p y_{2}},\label{222}\\
&\text{for $j\ge 3$, \ }\frac{\p^{2} V}{\p y_{1}\p y_{j}}=H(y_{1})H(y_{2})\frac{\p^{2} U}{\p y_{1}\p y_{j}}, \quad
\frac{\p^{2} V}{\p y_{2}\p y_{j}}=H(y_{1})H(y_{2})\frac{\p^{2} U}{\p y_{2}\p y_{j}},\label{333}\\
&\text{for $j, k\ge 3$, \ }\frac{\p^{2} V}{\p y_{j}\p y_{k}}=H(y_{1})H(y_{2})\frac{\p^{2} U}{\p y_{j}\p y_{k}}.\label{444}
\end{align}
\end{lemma}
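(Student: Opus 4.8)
The plan is to read all six identities as equalities of distributions on $(-1,1)^n$ and to prove them by pairing with a test function $\varphi\in\mooc\bigl((-1,1)^n\bigr)$ and integrating by parts, keeping careful track of the boundary contributions carried on the two hyperplanes $\{y_1=0\}$ and $\{y_2=0\}$. Equivalently, each time a derivative $\partial_{y_1}$ (resp. $\partial_{y_2}$) falls on the factor $H(y_1)$ (resp. $H(y_2)$) it produces a term supported on $\{y_1=0\}$ (resp. on $\{y_2=0\}$) whose density is a trace of $U$ or of one of its first derivatives on that hyperplane, weighted by the Heaviside factor in the remaining variable; the hypotheses \eqref{0001}--\eqref{0002}, together with their derivatives along the hyperplane, force every such density to vanish on the relevant half, which is exactly what the statement records.

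First I would prove \eqref{111}. Writing $\langle\partial_{y_1}V,\varphi\rangle=-\int_{y_1>0,\ y_2>0}U\,\partial_{y_1}\varphi\,dy$ and integrating by parts in $y_1$ gives the interior term $\int_{y_1>0,\ y_2>0}(\partial_{y_1}U)\,\varphi\,dy$ plus the boundary term $\int_{y_2>0}U(0,y_2,y_3)\,\varphi(0,y_2,y_3)\,dy_2\,dy_3$, and the latter vanishes by \eqref{0001}; the formula for $\partial_{y_2}V$ is symmetric, using \eqref{0002}. Then for the second-order identities I would feed the first-order formulas back into the same one-variable integration-by-parts step. For \eqref{222}, starting from $\partial_{y_2}V=H(y_1)H(y_2)\partial_{y_2}U$ and differentiating once more in $y_1$, the boundary term on $\{y_1=0\}$ is $\int_{y_2>0}(\partial_{y_2}U)(0,y_2,y_3)\,\varphi\,dy_2\,dy_3$, which vanishes because differentiating \eqref{0001} in the tangential variable $y_2$ gives $(\partial_{y_2}U)(0,y_2,y_3)=0$ for $y_2\ge0$. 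For \eqref{333}, the relation $\partial_{y_j}V=H(y_1)H(y_2)\partial_{y_j}U$ ($j\ge3$) holds with no boundary term, and differentiating it in $y_1$ produces the boundary term $\int_{y_2>0}(\partial_{y_j}U)(0,y_2,y_3)\,\varphi\,dy_2\,dy_3$, which vanishes after differentiating \eqref{0001} in the tangential variable $y_j$; the companion formula for $\partial_{y_2}\partial_{y_j}V$ is obtained in the same way from \eqref{0002} differentiated in $y_j$. Finally \eqref{444} is immediate, since for $j,k\ge3$ neither derivative meets the Heaviside factors.

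I do not expect a serious obstacle here: the whole content is the cancellation of the surface terms just described. The only point requiring a little care --- the closest thing to a difficulty --- is to keep the bookkeeping honest: one must note that $U\in C^2$ makes all of the boundary integrands above continuous, so the integrations by parts are legitimate, and that the vanishing relations \eqref{0001}--\eqref{0002} may be differentiated only along directions tangent to the respective hyperplane, that is in $y_2,y_3$ for \eqref{0001} and in $y_1,y_3$ for \eqref{0002}. It is exactly this restriction that singles out the six stated identities and explains why, for instance, one cannot expect an analogous clean formula for $\partial_{y_1}^2V$ or $\partial_{y_2}^2V$.
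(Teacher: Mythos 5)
Your proof is correct and follows essentially the same route as the paper: distributional Leibniz/integration by parts producing surface terms on $\{y_1=0\}$ and $\{y_2=0\}$, which are killed by \eqref{0001}--\eqref{0002} and their tangential derivatives (the paper phrases this via $\delta_0(y_1)\otimes H(y_2)$ measures paired with continuous traces, which is the same computation). The only cosmetic difference is in \eqref{333}, where the paper differentiates the already-proved \eqref{111} in $y_j$ (trivially, since the Heaviside factors do not depend on $y_j$) instead of differentiating $\p_{y_j}V$ in $y_1$ as you do; both orders work.
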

\begin{proof}
We start with \eqref{111}: we note first that both sides of the equalities are making sense, left-hand sides as distribution derivatives of the $L^{\io}_{loc}$ function $V$
and 
 right-hand sides  as products of $C^{1}$ functions $\p U/\p y_{j}$ by the $L^{\io}_{loc}$ function $H(y_{1}) H(y_{2})$.
 We need only to use Leibniz formula using Assumptions \eqref{0001}, \eqref{0002},
 so that
 $$
 \frac{\p V}{\p y_{1}}= H(y_{1}) H(y_{2})\frac{\p U}{\p y_{1}}+\underbrace{\bigl(\delta_{0}(y_{1})\otimes H(y_{2})\bigr) }_{\text{measure}}\underbrace{U}_{\substack{\text{continuous}\\\text{function}}}.
 $$
 Let $\varphi\in\mooc\bigl((-1,1)^{n}=J\bigr)$.
 We have with brackets of duality $\langle \mathscr D'^{(0)}(J), C^{0}_{c}(J)\rangle $
 (here $\mathscr D'^{(0)}(J)$ stands for the Radon measures on $J$),
\begin{multline*}
\poscal{\bigl(\delta_{0}(y_{1})\otimes H(y_{2})\bigr) U}{\varphi}=
\poscal{\bigl(\delta_{0}(y_{1})\otimes H(y_{2})\bigr)}{U\varphi}
\\=\iint H(y_{2}) U(0,y_{2}, y_{3})\varphi(0,y_{2}, y_{3}) dy_{2}dy_{3}=0,
\end{multline*}
thanks to \eqref{0001};
similarly, we obtain the second formula in \eqref{111}, using \eqref{0002}.
Starting from the now proven \eqref{111}, Formula \eqref{333} is trivial and \eqref{444}
is an immediate consequence of the definition of $V$.
We are left with \eqref{222}: we have from \eqref{111}, noting that $\p U/\p y_{2}$ is a $C^{1}$ function,
\begin{multline}\label{penlem}
\frac{\p}{\p y_{1}}\left\{\frac{\p V}{\p y_{2}}\right\}=\frac{\p}{\p y_{1}}\left\{H(y_{1})H(y_{2})
\frac{\p U}{\p y_{2}}
\right\}
=\bigl(\delta_{0}(y_{1})\otimes H(y_{2})\bigr)\frac{\p U}{\p y_{2}}\\+
H(y_{1})H(y_{2})\frac{\p^{2}U}{\p y_{1}\p y_{2}},
\end{multline}
and with the above notations, for $\varphi\in \mooc(J)$,
we have 
\begin{multline}\label{lemlas}
\poscal{\bigl(\delta_{0}(y_{1})\otimes H(y_{2})\bigr) \frac{\p U}{\p y_{2}}}{\varphi}=
\poscal{\bigl(\delta_{0}(y_{1})\otimes H(y_{2})\bigr)}{ \frac{\p U}{\p y_{2}}\varphi}
\\=\iint H(y_{2}) \frac{\p U}{\p y_{2}}(0,y_{2}, y_{3})\varphi(0,y_{2}, y_{3}) dy_{2}dy_{3}.
\end{multline}
The identity \eqref{0001} holds for $(y_{2},y_{3})\in (0,1)\times (-1, 1)^{n-2}$, and thus the continuous function
$\p U/\p y_{2}$ satisfies 
$$
\frac{\p U}{\p y_{2}} (0, y_{2}, y_{3})=0\quad \text{for $(y_{2},y_{3})\in (0,1)\times (-1, 1)^{n-2}$},
$$
which implies from \eqref{lemlas}
$$
\bigl(\delta_{0}(y_{1})\otimes H(y_{2})\bigr) \frac{\p U}{\p y_{2}}=0,
$$
so that \eqref{penlem} yields the sought \eqref{222}.
 \end{proof}
 \begin{lemma}\label{lem222}
 Let $U$ be a real-valued $C^{2}$ function defined on $(-1,1)^{n}$ satisfying the assumptions of Lemma \ref{lem111}. Let $B(y)=(\beta_{jk}(y))_{1\le j\le k}$ be a real symmetric matrix of class $C^{1}$ on $(-1,1)^{n}$ 
 such that $\beta_{11}=\beta_{22}=0$.
 Then defining
\begin{equation}\label{213}
V(y)=H(y_{1}) H(y_{2}) U(y_{1}, y_{2}, y_{3}),
\end{equation}
we have  $\beta_{jk}\frac{\p^{2} V}{\p y_{j}\p y_{k}}\in L^{\io}_{loc}$ for all $j,k\in \{1,\dots,n\}$ and 
\begin{align}
&\bigl(\poscal{B(y) \p_{y}}{\p_{y}} V\bigr)(y)=H(y_{1})H(y_{2}) \bigl(\poscal{B(y) \p_{y}}{\p_{y}} U\bigr)(y),\label{214}\\
&(\frac{\p V}{\p y})(y)=H(y_{1})H(y_{2})(\frac{\p U}{\p y})(y).\label{215}
\end{align}
\end{lemma}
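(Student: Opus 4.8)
The plan is to deduce everything from Lemma~\ref{lem111}, applied term by term. First I would expand the second-order operator as $\poscal{B(y)\p_{y}}{\p_{y}}=\sum_{1\le j,k\le n}\beta_{jk}(y)\p_{y_{j}}\p_{y_{k}}$ and use the hypothesis $\beta_{11}=\beta_{22}=0$ to discard the two pure second derivatives $\p_{y_{1}}^{2}V$ and $\p_{y_{2}}^{2}V$ --- precisely the only ones carrying a genuine singular (measure) contribution, since $U$ is not assumed to vanish in the $y_{1}$-direction along $y_{2}\ge 0$ nor conversely. What remains is the cross term $2\beta_{12}\p_{y_{1}}\p_{y_{2}}$, the mixed terms $2\beta_{1j}\p_{y_{1}}\p_{y_{j}}$ and $2\beta_{2j}\p_{y_{2}}\p_{y_{j}}$ for $j\ge 3$, and the terms $\beta_{jk}\p_{y_{j}}\p_{y_{k}}$ with $j,k\ge 3$. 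For each such pair of indices, formulas \eqref{222}, \eqref{333}, \eqref{444} give $\p_{y_{j}}\p_{y_{k}}V=H(y_{1})H(y_{2})\,\p_{y_{j}}\p_{y_{k}}U$, and since $\p_{y_{j}}\p_{y_{k}}U$ is continuous while $H(y_{1})H(y_{2})$ is bounded, multiplication by the $C^{1}$ coefficient $\beta_{jk}$ produces an element of $L^{\io}_{loc}$. For the remaining pairs $(j,k)\in\{(1,1),(2,2)\}$ the coefficient vanishes identically, so $\beta_{jj}\p_{y_{j}}^{2}V=0\in L^{\io}_{loc}$; this establishes the $L^{\io}_{loc}$ assertion for all $j,k$ and, in particular, shows that $\poscal{B(y)\p_{y}}{\p_{y}}V$ is a well-defined function in $L^{\io}_{loc}$ rather than merely a distribution.

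Next I would sum these identities and factor out the common $H(y_{1})H(y_{2})$, obtaining $\poscal{B(y)\p_{y}}{\p_{y}}V=H(y_{1})H(y_{2})\,R(y)$, where $R$ is the sum of the terms $\beta_{jk}\p_{y_{j}}\p_{y_{k}}U$ over the index pairs retained above. Since $\beta_{11}=\beta_{22}=0$, one may freely reinsert the two missing diagonal terms $\beta_{11}\p_{y_{1}}^{2}U$ and $\beta_{22}\p_{y_{2}}^{2}U$ on the right without changing $R$, so that $R=\poscal{B(y)\p_{y}}{\p_{y}}U$ and \eqref{214} follows. As for \eqref{215}, the first two components $\p_{y_{1}}V$ and $\p_{y_{2}}V$ are exactly the content of \eqref{111}, while for $j\ge 3$ the factor $H(y_{1})H(y_{2})$ does not depend on $y_{j}$, so $\p_{y_{j}}V=H(y_{1})H(y_{2})\p_{y_{j}}U$ is immediate from the definition \eqref{213} and the $C^{1}$ regularity of $U$; collecting the $n$ components gives \eqref{215}.

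I do not expect a real analytic obstacle here: the whole force of the hypotheses $\beta_{11}=\beta_{22}=0$ and of the vanishing conditions on $U$ is exactly to make every would-be distributional contribution either absent or identically zero, and Lemma~\ref{lem111} has already isolated that mechanism. The only point requiring a little care is the bookkeeping that guarantees each product $\beta_{jk}\p_{y_{j}}\p_{y_{k}}V$ makes sense before the identity \eqref{214} is asserted --- i.e. reading the off-diagonal retained terms as products of a continuous function with an $L^{\io}_{loc}$ function, and the $(1,1)$ and $(2,2)$ terms as the trivial product of the zero function with a distribution.
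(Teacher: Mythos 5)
Your proposal is correct and follows exactly the route the paper takes: the paper simply declares Lemma \ref{lem222} an immediate consequence of Lemma \ref{lem111}, and your argument is the explicit term-by-term bookkeeping (dropping the $(1,1)$ and $(2,2)$ terms via $\beta_{11}=\beta_{22}=0$, applying \eqref{111}--\eqref{444} to the remaining derivatives) that this one-line proof leaves to the reader.
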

\begin{proof}[The proof of this lemma is an immediate consequence of Lemma \ref{lem111}]
 \end{proof}
\begin{lemma}\label{lem23}
 Let $U, V, B$ as in Lemma \ref{lem222} such that there exists a constant $C\ge 0$ such that 
 \begin{multline}\label{216}
 \forall y\in (-1,1)^{n}\text{ with $y_{1}>0$ and $y_{2}>0$}, \quad 
\val{\bigl(\poscal{B(y) \p_{y}}{\p_{y}} U\bigr)(y)}\\\le C\bigl(\val{(\nabla_{y} U)(y)}+\val{U(y)}\bigr).
\end{multline}
Then $\poscal{B(y) \p_{y}}{\p_{y}} V, \nabla_{y}V, V$ are locally bounded measurable and
 \begin{multline}\label{}
\text{for almost all } y\in (-1,1)^{n}, \quad 
\val{\bigl(\poscal{B(y) \p_{y}}{\p_{y}} V\bigr)(y)}\\\le C\bigl(\val{(\nabla_{y} V)(y)}+\val{V(y)}\bigr).
\end{multline}
\end{lemma}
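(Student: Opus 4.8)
The plan is to read off Lemma~\ref{lem23} from Lemma~\ref{lem222} by splitting $(-1,1)^{n}$ according to the signs of $y_{1}$ and $y_{2}$. First I would observe that the identities \eqref{214} and \eqref{215} already display $\poscal{B(y)\p_{y}}{\p_{y}}V$, $\nabla_{y}V$ and $V$ as the product of the $L^{\io}_{loc}$ function $H(y_{1})H(y_{2})$ by the continuous functions $\poscal{B(y)\p_{y}}{\p_{y}}U$, $\nabla_{y}U$ and $U$ respectively; hence the three functions on the left are locally bounded measurable, which settles the first claim of the lemma.

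For the pointwise inequality I would distinguish the two open sets $\Omega_{+}=\{y\in(-1,1)^{n}:y_{1}>0,\ y_{2}>0\}$ and $\Omega_{-}=\{y\in(-1,1)^{n}:y_{1}<0\}\cup\{y\in(-1,1)^{n}:y_{2}<0\}$. On $\Omega_{+}$ we have $H(y_{1})H(y_{2})=1$, so \eqref{214}, \eqref{215} and the definition \eqref{213} give $\poscal{B(y)\p_{y}}{\p_{y}}V=\poscal{B(y)\p_{y}}{\p_{y}}U$, $\nabla_{y}V=\nabla_{y}U$ and $V=U$ there, and the desired estimate is exactly hypothesis \eqref{216}. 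On $\Omega_{-}$ we have $H(y_{1})H(y_{2})=0$, so the same identities yield $\poscal{B(y)\p_{y}}{\p_{y}}V=0$, $\nabla_{y}V=0$ and $V=0$, and the inequality holds trivially with both sides equal to $0$.

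Finally I would note that $(-1,1)^{n}\setminus(\Omega_{+}\cup\Omega_{-})$ is contained in $\{y_{1}=0\}\cup\{y_{2}=0\}$, which has Lebesgue measure zero, so the inequality holds for almost every $y\in(-1,1)^{n}$, as claimed. I do not expect any genuine obstacle here: the only point deserving a line of care is that \eqref{214} and \eqref{215} are to be understood as equalities of $L^{\io}_{loc}$ functions (equivalently, of distributions represented by such functions), so that restricting them to the open sets $\Omega_{\pm}$ and comparing values almost everywhere is legitimate and consistent with the ``for almost all $y$'' formulation of the conclusion.
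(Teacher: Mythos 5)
Your proposal is correct and is essentially the paper's own argument: the paper simply writes your case distinction compactly by multiplying inequality \eqref{216} through by $H(y_{1})H(y_{2})$ and using \eqref{213}, \eqref{214}, \eqref{215} to absorb that factor into $V$, $\nabla_{y}V$, $\poscal{B(y)\p_{y}}{\p_{y}}V$, which is exactly your split into $\{y_{1}>0,y_{2}>0\}$ versus the set where the Heaviside product vanishes. Nothing further is needed.
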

\begin{proof}
 Using \eqref{213}, \eqref{214}, \eqref{215} we obtain that
 $\poscal{B(y) \p_{y}}{\p_{y}} V, \nabla_{y}V, V$ are locally bounded measurable and
\begin{align*}
\val{\bigl(\poscal{B(y) \p_{y}}{\p_{y}} V\bigr)(y)}&=H(y_{1})H(y_{2})
\val{\bigl(\poscal{B(y) \p_{y}}{\p_{y}} U\bigr)(y)}
\\
&\hs\le 
 H(y_{1})H(y_{2}) C\bigl(\val{(\nabla_{y} U)(y)}+\val{U(y)}\bigr)
 \\
 &\hs\hs=C\bigl(\val{(\nabla_{y} V)(y)}+\val{V(y)}\bigr).
\end{align*}
 \end{proof}
\begin{rem}\label{rem24}\label{warning}\rm
 The reader may note that we have used \eqref{0001},\eqref{0002} (i.e. vanishing of $U$ on the boundary of $\{y_{1}>0, y_{2}>0\}$ \underline{and} the fact that the boundary is characteristic, expressed in the $y$ coordinates by the equalities $\beta_{11}=\beta_{22}=0$. All these conditions are important to get Lemma \ref{lem222}.
 For instance, assuming only \eqref{0001},\eqref{0002}, but $\beta_{11}\not=0$, we would have to calculate
 \begin{align}
\frac{\p^{2}V}{\p y_{1}^{2}}&=\frac{\p}{\p y_{1}}\bigl\{\overbrace{(\delta_{0}(y_{1})\otimes H(y_{2})) U}^{=0\text{ from  \eqref{0001}}}+H(y_{1})H(y_{2})\frac{\p U}{\p y_{1}}\bigr\}\notag
\\
&=(\delta_{0}(y_{1})\otimes H(y_{2}))\frac{\p U}{\p y_{1}}+H(y_{1})H(y_{2})\frac{\p^{2} U}{\p y_{1}^{2}}\notag\\
&=(\delta_{0}(y_{1})\otimes H(y_{2}))\frac{\p U}{\p y_{1}}(0, y_{2},y_{3})+H(y_{1})H(y_{2})\frac{\p^{2} U}{\p y_{1}^{2}},
\label{noh2}
\end{align}
 and the term $\frac{\p U}{\p y_{1}}(0, y_{2},y_{3})$ can be non-zero (say for $U=y_{1}y_{2}$).
 The consequence of that situation would be that, even with $C=0$ in Assumption \eqref{216} and $\beta_{22}=0$,
 we would obtain the following equality for $V$
\begin{equation}\label{count}
\bigl(\poscal{B(y) \p_{y}}{\p_{y}} V\bigr)(y)=(\delta_{0}(y_{1})\otimes H(y_{2}))\frac{\p U}{\p y_{1}}(0, y_{2},y_{3})
\beta_{11}(0, y_{2},y_{3}),
\end{equation}
 where the rhs is a simple layer that cannot be controlled pointwise by $V$ or its first-order  derivatives.
 The fact that $V$ inherits a differential inequality from $U$, {\it assuming a simple vanishing of $U$} on the boundary $\p \Omega$, 
 is thus linked with the geometric situation: both hypersurfaces $\Sigma_{\pm}$ are characteristic for the operator $P$.
 Of course, we could have assumed a second-order vanishing (i.e. vanishing of the function and its first derivatives on $\p\Omega$, an assumption which would allow us to get rid of the rhs in \eqref{count} since then, we would have ${\p U}/{\p y_{1}}(0, y_{2},y_{3})=0$),
 but we would have lost most of the flavour of the model case given in Section \ref {secik1}.
\end{rem}
\begin{rem}\label{rem25}\rm
Let $u$ be a function satisfying the assumptions of Theorem \ref{lerner},
let $v$ be defined by \eqref{functionv} and let $x_{0}$ be a given point in $\Sigma_{+}\cap \Sigma_{-}$.
The assumptions \eqref{diff++}, \eqref{119} and Lemma \ref{lem23} imply that there exists a neighborhood  $\mathcal U_{0}$ 
of $x_{0}$
and a constant $C_{0}\ge 0$ such that
\begin{align}
 &\forall x\in \mathcal U_{0},\quad\val{(Pv)(x)}\le C_{0}\bigl(\val{(\nabla_{x}v)(x)}+\val{v(x)}\bigr),\label{diff++++}\\
 &\supp v\subset \overline \Omega,\quad v_{\vert\overline{\Omega}}=u.
\label{119+}
\end{align}
As a result, to prove Theorem \ref{lerner}, we are reduced
 to proving that $v$ vanishes on a neighborhood of $x_{0}$ in $\mathcal U_{0}$.
\end{rem}
\begin{rem}\rm
Building on Remark \ref{rem24}, we note that $u$ satisfies the diffe\-rential inequality
  \eqref{diff++}
 only on $\Omega$, which is not a convenient situation 
 to use H\"ormander's pseudo-convexity result.
 This is  the main reason for which we have introduced the new function $v$ given by \eqref{functionv}:
 we want to point out that the fact that $v$ is still satisfying some differential inequality (here \eqref{diff++++}) is a consequence of three geometrical facts:
 first of all,  $u$ is vanishing on $\p \Omega$, next, both hypersurfaces $\Sigma_{\pm}$ are characteristic for $P$ and finally $\Sigma_{\pm}$ are transverse.
 The function $v$ is defined in a neighborhood of $\Sigma_{+}\cap \Sigma_{-}$, supported in $\overline\Omega$ and a satisfies a differential inequality: we are in good position to use the classical Carleman estimates, provided we find a suitable pseudo-convex hypersurface.
\end{rem}
\subsection{The sign condition}
\begin{lemma}\label{lem26}
Let $\mathcal U,  Q,  \phi_{\pm}$ as given  in \eqref{operator}---\eqref{sign}.
We define 
\begin{align}\label{times}
\psi_{1}=\frac12\bigl(\phi_{+}+\phi_{-}\bigr),\qquad
\psi_{0}=\frac12\bigl(\phi_{-}-\phi_{+}\bigr).
\end{align}
Then we have ,
\begin{align}
&\poscal{Q(x)d\psi_{1}(x)}{d\psi_{1}(x)} >0>\poscal{Q(x)d\psi_{0}(x)}{d\psi_{0}(x)} \quad \text{at  }\Sigma_{+}\cap \Sigma_{-},\label{222+}\\
&\Omega=\{x\in \mathcal U, \psi_{1}(x)>\val{\psi_{0}(x)}\},\label{223}\\
&d\psi_{1}\wedge d\psi_{0}\not=0\quad \text{at  }\Sigma_{+}\cap \Sigma_{-}.\label{224}
\end{align}
\end{lemma}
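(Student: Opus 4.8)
The statement is entirely elementary: the map $\xi\mapsto\poscal{Q(x)\xi}{\xi}$ is a quadratic form and $(\omega,\omega')\mapsto\omega\wedge\omega'$ is bilinear, so it suffices to invert the linear substitution \eqref{times} --- namely $\phi_{+}=\psi_{1}-\psi_{0}$ and $\phi_{-}=\psi_{1}+\psi_{0}$, hence $d\psi_{1}=\frac12(d\phi_{+}+d\phi_{-})$ and $d\psi_{0}=\frac12(d\phi_{-}-d\phi_{+})$ --- and to feed it into the hypotheses \eqref{bothcar}, \eqref{sign}, \eqref{transverse}. Note in passing that $\psi_{0},\psi_{1}$ are of class $C^{2}$ since the $\phi_{\pm}$ are.

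First I would establish \eqref{222+}. Fix $x\in\Sigma_{+}\cap\Sigma_{-}$. Being on $\Sigma_{+}$ gives $\poscal{Q(x)d\phi_{+}(x)}{d\phi_{+}(x)}=0$ by the first equality in \eqref{bothcar}, being on $\Sigma_{-}$ gives $\poscal{Q(x)d\phi_{-}(x)}{d\phi_{-}(x)}=0$ by the second, and \eqref{sign} gives $\poscal{Q(x)d\phi_{+}(x)}{d\phi_{-}(x)}>0$. Expanding by bilinearity and using that $Q(x)$ is symmetric,
\begin{align*}
\poscal{Q(x)d\psi_{1}(x)}{d\psi_{1}(x)}&=\tfrac14\bigl(0+2\poscal{Q(x)d\phi_{+}(x)}{d\phi_{-}(x)}+0\bigr)=\tfrac12\poscal{Q(x)d\phi_{+}(x)}{d\phi_{-}(x)}>0,\\
\poscal{Q(x)d\psi_{0}(x)}{d\psi_{0}(x)}&=\tfrac14\bigl(0-2\poscal{Q(x)d\phi_{+}(x)}{d\phi_{-}(x)}+0\bigr)=-\tfrac12\poscal{Q(x)d\phi_{+}(x)}{d\phi_{-}(x)}<0,
\end{align*}
which is precisely \eqref{222+}.

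Next, \eqref{223} is merely a rewriting of the definition \eqref{omega}: since $\phi_{+}=\psi_{1}-\psi_{0}$ and $\phi_{-}=\psi_{1}+\psi_{0}$, one has $\phi_{+}(x)>0$ and $\phi_{-}(x)>0$ if and only if $\psi_{1}(x)>\psi_{0}(x)$ and $\psi_{1}(x)>-\psi_{0}(x)$, that is $\psi_{1}(x)>\val{\psi_{0}(x)}$. Finally, for \eqref{224} I compute
$$
d\psi_{1}\wedge d\psi_{0}=\tfrac14(d\phi_{+}+d\phi_{-})\wedge(d\phi_{-}-d\phi_{+})=\tfrac12\,d\phi_{+}\wedge d\phi_{-},
$$
the terms $d\phi_{\pm}\wedge d\phi_{\pm}$ vanishing, and this is nonzero at $\Sigma_{+}\cap\Sigma_{-}=\{\phi_{+}=\phi_{-}=0\}$ by the transversality assumption \eqref{transverse}. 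Since no step goes beyond a substitution and bilinearity, I do not expect a genuine obstacle here; the only point deserving a little care is to keep track of which of the two characteristic relations in \eqref{bothcar} is invoked, each being available only on the corresponding hypersurface --- which is exactly why \eqref{222+} and \eqref{224} are asserted only at the intersection $\Sigma_{+}\cap\Sigma_{-}$, whereas \eqref{223} holds throughout $\mathcal U$.
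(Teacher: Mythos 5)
Your proof is correct and follows essentially the same route as the paper: elementary bilinearity of $\xi\mapsto\poscal{Q(x)\xi}{\xi}$ combined with the substitution between $\phi_{\pm}$ and $\psi_{0},\psi_{1}$, the characteristic conditions \eqref{bothcar} and the sign condition \eqref{sign}. The only (harmless) difference is one of bookkeeping: you expand $\poscal{Q\,d\psi_{j}}{d\psi_{j}}$ directly in terms of $d\phi_{\pm}$ and read off the exact values $\pm\tfrac12\poscal{Q\,d\phi_{+}}{d\phi_{-}}$, whereas the paper expands the two characteristic relations in terms of $d\psi_{0},d\psi_{1}$ and then brings in \eqref{sign}; you also write out \eqref{223} and \eqref{224}, which the paper dismisses as obvious.
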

\begin{proof}
Properties \eqref{223}-\eqref{224} are obvious.
Let us prove \eqref{222+}.
 From \eqref{bothcar},
 we have
 $$
 \poscal{Q(x)d\phi_{+}(x)}{d\phi_{+}(x)} =0\ \text{at $\Sigma_{+}$},\quad  \poscal{Q(x)d\phi_{-}(x)}{d\phi_{-}(x)} =0\ \text{at $\Sigma_{-}$},
 $$ 
 so that at $\Sigma_{+}\cap \Sigma_{-}$,
 since 
 $\phi_{\pm}=\psi_{1}\mp\psi_{0}$,
 $$
 \poscal{Q(x)d\psi_{1}(x)}{d\psi_{1}(x)} \mp 2\poscal{Q(x)d\psi_{1}(x)}{d\psi_{0}(x)} +\poscal{Q(x)d\psi_{0}(x)}{d\psi_{0}(x)} =0,
 $$
 implying
\begin{equation}\label{225}
 \poscal{Q(x)d\psi_{1}(x)}{d\psi_{1}(x)} +\poscal{Q(x)d\psi_{0}(x)}{d\psi_{0}(x)} =0=\poscal{Q(x)d\psi_{1}(x)}{d\psi_{0}(x)}.
\end{equation}
As a consequence, we have  from \eqref{sign},  at $\Sigma_{+}\cap \Sigma_{-}$, 
$$
0<\poscal{Q(x)d\phi_{+}(x)}{d\phi_{-}(x)} =
\poscal{Q(x)d\psi_{1}(x)}{d\psi_{1}(x)} -\poscal{Q(x)d\psi_{0}(x)}{d\psi_{0}(x)}, 
$$
and \eqref{225} yields
$$
0<2\poscal{Q(x)d\psi_{1}(x)}{d\psi_{1}(x)} 
\text{ and thus }\poscal{Q(x)d\psi_{0}(x)}{d\psi_{0}(x)} <0,
$$
which is the sought result.
\end{proof}
\begin{rem}\rm 
 Property \eqref{222+} means that, near $\Sigma_{+}\cap \Sigma_{-}$, 
 the hypersurface defined by $\{\psi_{0}=0\}$ is space-like
 whereas 
  the hypersurface defined by $\{\psi_{1}=0\}$ is time-like.
\end{rem}
\subsection{Finding a pseudo-convex hypersurface}
Let $p$ be given by \eqref{operator} and let $\psi$ be a $C^{2}$ function defined on $\mathcal U$ such that $d\psi\not=0$ at $\psi=0$.
We recall that the oriented hypersurface with equation $\{\psi=0\}$ is pseudo-convex with respect to $P$
whenever \eqref{hormander}
holds.
\begin{lemma}\label{lem28}
 Let $x_{0}\in \Sigma_{+}\cap \Sigma_{-}$ and let $\lambda>0$ be given.
 Then there exists a neighborhood $\mathcal U_{0,\lambda}$
 of $x_{0}$ in 
 $\mathcal U$ such that
 $$
 \Omega\cap \mathcal U_{0,\lambda}\subset\{x\in \mathcal U_{0,\lambda}, \psi_{1}(x)>\lambda \psi_{0}^{2}(x)\}.
 $$
\end{lemma}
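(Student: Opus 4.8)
The plan is to use only the explicit description of $\Omega$ obtained in Lemma \ref{lem26}, namely $\Omega=\{x\in\mathcal U,\ \psi_{1}(x)>\val{\psi_{0}(x)}\}$ from \eqref{223}, together with the continuity of $\psi_{0}$ and the fact that $\psi_{0}$ vanishes at $x_{0}$.

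First I would note that since $x_{0}\in\Sigma_{+}\cap\Sigma_{-}$ we have $\phi_{+}(x_{0})=\phi_{-}(x_{0})=0$, hence by the definitions \eqref{times}, $\psi_{0}(x_{0})=\frac12\bigl(\phi_{-}(x_{0})-\phi_{+}(x_{0})\bigr)=0$. Since $\psi_{0}\in C^{2}(\mathcal U)$ is in particular continuous, there is a neighborhood $\mathcal U_{0,\lambda}$ of $x_{0}$ contained in $\mathcal U$ on which $\val{\psi_{0}(x)}<\lambda^{-1}$; note that this is where the dependence of the neighborhood on $\lambda$ enters.

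Then for $x\in\Omega\cap\mathcal U_{0,\lambda}$ I would argue as follows: if $\psi_{0}(x)=0$, then $\psi_{1}(x)>\val{\psi_{0}(x)}=0=\lambda\psi_{0}(x)^{2}$; if $\psi_{0}(x)\not=0$, then $\lambda\val{\psi_{0}(x)}<1$, so $\lambda\psi_{0}(x)^{2}<\val{\psi_{0}(x)}<\psi_{1}(x)$, where the last inequality uses \eqref{223}. In either case $\psi_{1}(x)>\lambda\psi_{0}(x)^{2}$, which is exactly the claimed inclusion $\Omega\cap\mathcal U_{0,\lambda}\subset\{x\in\mathcal U_{0,\lambda},\ \psi_{1}(x)>\lambda\psi_{0}^{2}(x)\}$.

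There is essentially no obstacle here; the content of the lemma is entirely carried by the description \eqref{223} of $\Omega$ established in Lemma \ref{lem26}, and the purpose of the statement is to prepare the choice of the weight $\psi=\psi_{1}-\lambda\psi_{0}^{2}$ (for $\lambda$ large) as the candidate pseudo-convex function: the lemma guarantees that $\overline\Omega$ sits locally in its epigraph $\{\psi\ge0\}$, while the sign information \eqref{222+} will be used to verify H\"ormander's pseudo-convexity condition \eqref{hormander} for the oriented hypersurface $\{\psi=0\}$.
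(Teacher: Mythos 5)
Your proof is correct and is essentially the paper's own argument: both rest on the description $\Omega=\{\psi_{1}>\val{\psi_{0}}\}$ from \eqref{223} and the observation that $\val{\psi_{0}(x)}<1/\lambda$, which holds on a neighborhood of $x_{0}$ since $\psi_{0}(x_{0})=0$, gives $\psi_{1}(x)>\val{\psi_{0}(x)}\ge\lambda\psi_{0}^{2}(x)$. Your case split on $\psi_{0}(x)=0$ versus $\psi_{0}(x)\not=0$ is just a slightly more explicit rendering of the same inequality chain.
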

\begin{proof}
 Assuming $\phi_{+}(x)>0, \phi_{-}(x)>0$, we find that
 $$
 \psi_{1}(x)>\val{\psi_{0}(x)}\text{ and thus } \psi_{1}(x)>\val{\psi_{0}(x)}\ge \lambda\psi_{0}^{2}(x),
 $$
 if $\val{\psi_{0}(x)}<1/\lambda$; since the latter condition defines a neighborhood of $x_{0}$, 
 we obtain the result.
\end{proof}
\begin{lemma}\label{lem29}
Let $n\ge 2$ and 
 let $M$ be an $n\times n$ real symmetric matrix with signature $(n-1,1)$.
 Let $\xi_{0},\xi$ be two non-zero vectors of $\R^{n}$
 such that
 $$
 \poscal{M\xi}{\xi}=0,\quad   \poscal{M\xi_{0}}{\xi_{0}}<0.
 $$
 Then 
 $\poscal{M\xi}{\xi_{0}}\not=0$.
\end{lemma}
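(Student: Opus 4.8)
The plan is to argue by contradiction: assume $\poscal{M\xi}{\xi_{0}}=0$ and derive an impossibility from the signature hypothesis. As a preliminary step I would note that $\xi$ and $\xi_{0}$ are linearly independent, for if $\xi_{0}=c\xi$ with $c$ a scalar, then $\poscal{M\xi_{0}}{\xi_{0}}=c^{2}\poscal{M\xi}{\xi}=0$, contradicting $\poscal{M\xi_{0}}{\xi_{0}}<0$. Hence $V=\mathrm{span}(\xi,\xi_{0})$ is a genuinely $2$-dimensional subspace of $\R^{n}$.

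Next I would restrict the quadratic form $q(v)=\poscal{Mv}{v}$ to $V$. For $v=a\xi+b\xi_{0}$ one computes $q(v)=a^{2}\poscal{M\xi}{\xi}+2ab\,\poscal{M\xi}{\xi_{0}}+b^{2}\poscal{M\xi_{0}}{\xi_{0}}=b^{2}\poscal{M\xi_{0}}{\xi_{0}}\le 0$, where both the hypothesis $\poscal{M\xi}{\xi}=0$ and the contradiction hypothesis $\poscal{M\xi}{\xi_{0}}=0$ are used. Thus $q$ is negative semi-definite on the $2$-plane $V$.

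On the other hand, by Sylvester's law of inertia the form $q$ (with signature $(n-1,1)$, hence nondegenerate) admits a positive-definite subspace $W$ with $\dim W=n-1$. Then $\dim(V\cap W)\ge \dim V+\dim W-n=2+(n-1)-n=1$, so there exists a nonzero vector $w\in V\cap W$. Since $w\in W\setminus\{0\}$ we get $q(w)>0$, while $w\in V$ forces $q(w)\le 0$ --- a contradiction. This yields $\poscal{M\xi}{\xi_{0}}\ne 0$.

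I do not expect any serious obstacle here; the only points requiring a little care are confirming that $V$ has dimension $2$ (so the dimension count $\dim(V\cap W)\ge 1$ is legitimate) and invoking the signature hypothesis in the right form, namely as the existence of an $(n-1)$-dimensional positive-definite subspace. An equally short alternative route is to use Sylvester's law to bring $q$ into the standard form $\val{x}^{2}-t^{2}$ on $\R^{n-1}\times\R$ by a linear change of variables, write $\xi\leftrightarrow(x,s)$ and $\xi_{0}\leftrightarrow(x_{0},t_{0})$ with $\val{x}=\val{s}$ (as $\xi$ is null and nonzero, forcing $s\ne 0$) and $\val{x_{0}}<\val{t_{0}}$ (as $\xi_{0}$ is timelike, forcing $t_{0}\ne 0$), and observe that Cauchy--Schwarz gives $\val{\poscal{x}{x_{0}}}\le\val{x}\val{x_{0}}<\val{s}\val{t_{0}}=\val{st_{0}}$, which is incompatible with the relation $\poscal{x}{x_{0}}=st_{0}$ coming from $\poscal{M\xi}{\xi_{0}}=0$.
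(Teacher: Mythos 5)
Your primary argument is correct and takes a genuinely different route from the paper. You argue by contradiction intrinsically: if $\poscal{M\xi}{\xi_{0}}=0$, then $\xi,\xi_{0}$ are linearly independent (a multiple of a null vector cannot be timelike) and the quadratic form $q(v)=\poscal{Mv}{v}$ is negative semi-definite on the $2$-plane $V=\mathrm{span}(\xi,\xi_{0})$; since signature $(n-1,1)$ provides an $(n-1)$-dimensional subspace $W$ on which $q$ is positive definite, the Grassmann inequality $\dim(V\cap W)\ge 2+(n-1)-n=1$ produces a nonzero vector with $q(w)>0$ and $q(w)\le 0$, a contradiction. The dimension count and the semi-definiteness computation are both right, so the proof is complete. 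The paper instead reduces $M$ to the standard Lorentzian form $\val{\zeta}^{2}-\tau^{2}$ by Sylvester's law and runs a Cauchy--Schwarz estimate: $\val{\tau\tau_{0}}=\val{\zeta\cdot\zeta_{0}}\le\val{\zeta}\val{\zeta_{0}}=\val{\tau}\val{\zeta_{0}}<\val{\tau}\val{\tau_{0}}$, which is impossible; this is exactly the ``alternative route'' you sketch at the end, and that sketch is also correct. The comparison is a matter of taste: your coordinate-free version isolates the underlying geometric fact that a form of signature $(n-1,1)$ admits no $2$-dimensional subspace on which it is $\le 0$ (so a nonzero null vector can never be $M$-orthogonal to a timelike one), while the paper's normal-form computation is more elementary and self-contained, needing only Cauchy--Schwarz in $\R^{n-1}$ rather than the dimension-theoretic formulation of inertia.
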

\begin{proof}
There exists a $n\times n$
 invertible  matrix
$R$ such that
$$
\tr RMR= 
\begin{pmatrix}
 I_{n-1}&0\\
 0&-1
\end{pmatrix},
$$
where $I_{n-1}$ is the identity matrix with size $n-1$.
 Defining
 $$
 \eta_{0}=R^{-1}\xi_{0},\quad
  \eta=R^{-1}\xi,
 $$
 we get two non-zero vectors $\eta_{0}=(\zeta_{0},\tau_{0})$,
 $\eta=(\zeta,\tau)$ 
 of $\R^{n-1}\times \R$ such that,
with standard dot-products in $\R^{n-1}$,
\begin{equation}\label{888}
\val{\zeta}^{2}=\tau^{2},\quad 
\val{\zeta_{0}}^{2}<\tau_{0}^{2}.
\end{equation}
Note that since $(\zeta,\tau)\not=0$, the first equality implies that $\tau\not=0$.
 We have then $\poscal{M\xi}{\xi_{0}}=\zeta\cdot \zeta_{0}-\tau\tau_{0}$
 and if we had
 $\zeta\cdot \zeta_{0}=\tau\tau_{0}$,
 this would give
 $$
 \val{\tau\tau_{0}}=\val{\zeta\cdot \zeta_{0}}\le \val{\zeta}\val{\zeta_{0}}=\val{\tau}\val{\zeta_{0}}
 \underbrace{<}_{\substack{\tau\not=0\\\val{\zeta_{0}}<\val{\tau_{0}}}}\val{\tau}\val{\tau_{0}},
 $$
 which is impossible.
 As a result,
 we get indeed $\poscal{M\xi}{\xi_{0}}\not=0$.
\end{proof}
\begin{prop}\label{pro210}
  Let $x_{0}\in \Sigma_{+}\cap \Sigma_{-}$, and let $\psi_{0}, \psi_{1}$ be defined in Lemma \ref{lem26}. There exists $\lambda>0$ such that 
  $$
  \forall \xi\in \mathbb S^{n-1},\quad p(x_{0},\xi)=H_{p}{(\psi_{1}-\lambda \psi_{0}^{2})}(x_{0},\xi)=0\Longrightarrow
  H_{p}^{2}(\psi_{1}-\lambda \psi_{0}^{2})(x_{0},\xi)<0.
  $$
  This means  that the oriented hypersurface $S$
  with equation $\{\psi_{1}-\lambda \psi_{0}^{2}=0\}$
  is pseudo-convex with respect to the operator $P$ at $x_{0}$
  (note  that the first inequality in \eqref{222+} ensures that $S$ is non-characteristic for $P$).
\end{prop}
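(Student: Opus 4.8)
The plan is to use the quadratic corrector $-\lambda\psi_{0}^{2}$ as a convexification device, in the spirit of H\"ormander's passage from a non-characteristic weight to a strongly pseudo-convex one: the hypersurface $\{\psi_{1}=0\}$ alone (time-like near $x_{0}$, hence non-characteristic by \eqref{222+}) need not be pseudo-convex, but subtracting a sufficiently large multiple of the square of the space-like function $\psi_{0}$ will force the required second-order inequality. I would begin with the harmless normalizations at $x_{0}$: since $x_{0}\in\Sigma_{+}\cap\Sigma_{-}$ we have $\phi_{\pm}(x_{0})=0$, hence $\psi_{0}(x_{0})=\psi_{1}(x_{0})=0$; so, writing $\psi=\psi_{1}-\lambda\psi_{0}^{2}$, one gets $d\psi(x_{0})=d\psi_{1}(x_{0})$. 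Recalling from the footnote that $H_{p}(w)(x,\xi)=\frac{\p p}{\p\xi}\cdot d_{x}w=2\poscal{Q(x)\xi}{d_{x}w}$ when $w$ depends on $x$ alone, the tangency hypothesis $H_{p}(\psi)(x_{0},\xi)=0$ in the statement is therefore the same as $H_{p}(\psi_{1})(x_{0},\xi)=2\poscal{Q(x_{0})\xi}{d\psi_{1}(x_{0})}=0$, and in particular it is independent of $\lambda$.

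The key is then an identity for $H_{p}^{2}$. Since $a\mapsto H_{p}(a)=\poi{p}{a}$ is both linear and a derivation, $H_{p}^{2}$ is linear and $H_{p}^{2}(\psi_{0}^{2})=2\bigl(H_{p}(\psi_{0})\bigr)^{2}+2\psi_{0}H_{p}^{2}(\psi_{0})$; evaluating at $x_{0}$, where $\psi_{0}$ vanishes, one obtains
$$
H_{p}^{2}(\psi_{1}-\lambda\psi_{0}^{2})(x_{0},\xi)=H_{p}^{2}(\psi_{1})(x_{0},\xi)-2\lambda\bigl(H_{p}(\psi_{0})(x_{0},\xi)\bigr)^{2}.
$$
(Equivalently, in the explicit expansion of $H_{p}^{2}(\psi)(x_{0},\cdot)$ only the term carrying the Hessian of $\psi$ depends on $\lambda$, the corrector shifting $\mathrm{Hess}\,\psi_{1}(x_{0})$ by $-2\lambda\,d\psi_{0}(x_{0})\otimes d\psi_{0}(x_{0})$.)

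Next I would show that $H_{p}(\psi_{0})$ does not vanish on the relevant cone. Fix $\xi\in\mathbb S^{n-1}$ with $p(x_{0},\xi)=\poscal{Q(x_{0})\xi}{\xi}=0$. By \eqref{222+} (Lemma \ref{lem26}) we have $\poscal{Q(x_{0})d\psi_{0}(x_{0})}{d\psi_{0}(x_{0})}<0$, so in particular $d\psi_{0}(x_{0})\neq 0$; since $Q(x_{0})$ has signature $(n-1,1)$, Lemma \ref{lem29} applied with $M=Q(x_{0})$, $\xi_{0}=d\psi_{0}(x_{0})$ gives $\poscal{Q(x_{0})\xi}{d\psi_{0}(x_{0})}\neq 0$, i.e. $H_{p}(\psi_{0})(x_{0},\xi)\neq 0$. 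The set $K=\{\xi\in\mathbb S^{n-1}:p(x_{0},\xi)=H_{p}(\psi_{1})(x_{0},\xi)=0\}$ is closed in the sphere, hence compact; on $K$ the continuous function $\xi\mapsto\bigl(H_{p}(\psi_{0})(x_{0},\xi)\bigr)^{2}$ is everywhere positive, hence $\ge c>0$, while $\xi\mapsto H_{p}^{2}(\psi_{1})(x_{0},\xi)$ is $\le M$ on $\mathbb S^{n-1}$ for some constant $M$. Choosing $\lambda>\max(0,M/2c)$, the displayed identity gives $H_{p}^{2}(\psi_{1}-\lambda\psi_{0}^{2})(x_{0},\xi)\le M-2\lambda c<0$ for every $\xi\in K$ (if $K=\emptyset$ the implication is vacuous), which is the asserted pseudo-convexity of $S=\{\psi_{1}-\lambda\psi_{0}^{2}=0\}$ at $x_{0}$; and $S$ is non-characteristic there since $p(x_{0},d\psi(x_{0}))=p(x_{0},d\psi_{1}(x_{0}))=\poscal{Q(x_{0})d\psi_{1}(x_{0})}{d\psi_{1}(x_{0})}>0$ by \eqref{222+}.

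The only point that demands care is the $H_{p}^{2}$ identity of the second paragraph, i.e. correctly isolating the piece linear in $\lambda$; once that is in hand there is no genuine analytic obstacle, the conclusion being the soft combination of the non-vanishing furnished by Lemma \ref{lem29} with a compactness argument on the unit sphere.
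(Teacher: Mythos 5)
Your proposal is correct and follows essentially the same route as the paper: the same reduction of the tangency condition to $H_{p}(\psi_{1})(x_{0},\xi)=0$ using $\psi_{0}(x_{0})=0$, the same identity $H_{p}^{2}(\psi_{1}-\lambda\psi_{0}^{2})(x_{0},\xi)=H_{p}^{2}(\psi_{1})(x_{0},\xi)-2\lambda\bigl(H_{p}(\psi_{0})(x_{0},\xi)\bigr)^{2}$, the same use of Lemma \ref{lem29} with $M=Q(x_{0})$, $\xi_{0}=d\psi_{0}(x_{0})$, and the same compactness argument on the sphere to choose $\lambda$ large. No gaps to report.
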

\begin{proof}
We have
$$
p(x,\xi)= \poscal{Q(x)\xi}{\xi},\quad \frac12H_{p}(\psi)(x,\xi)=\poscal{Q(x)\xi}{d\psi(x)},
$$
\begin{multline}\label{expression}
 \frac12H_{p}^{2}(\psi)(x,\xi)=\sum_{1\le j\le n}\frac{\p p}{\p \xi_{j}}\Bigl(\poscal{\frac{\p Q}{\p x_{j}}\xi}{d\psi(x)}
 +\poscal{Q(x)\xi}{d\frac{\p \psi}{\p x_{j}}}
 \Bigr)
 \\
 -\hskip-25pt\underbrace{\poscal{\frac{\p Q}{\p x}\xi}{\xi}}_{\substack{\text{covector with }\\\text{components
 $\poscal{\p_{x_{j}} Q(x) \xi}{\xi}$
 }}}\hskip-20pt\cdot \underbrace{ Q(x) d\psi(x)}_{\substack{\text{vector}}}.
\end{multline}
We note that 
$H_{p}^{2}(\psi)$
is a quadratic form in the variable $\xi$,
with coefficients depending polynomially on 
$Q(x), \nabla _{x}Q(x), \nabla _{x}\psi, \nabla_{x}^{2}\psi$.
\par
 Let $\xi\in \mathbb S^{n-1}$ such that at $x_{0}\in \Sigma_{+}\cap \Sigma_{-}$, 
 $
  p(x_{0},\xi)=H_{p}{(\psi_{1}-\lambda \psi_{0}^{2})}(x_{0},\xi)=0.
 $
 Since $\psi_{0}(x_{0})=0$, we infer that 
 \begin{equation}\label{999}
\poscal{Q(x_{0})\xi}{\xi}=\poscal{Q(x_{0})\xi}{d\psi_{1}(x_{0})}=0,\quad \text{i.e.}\quad
  p(x_{0},\xi)=H_{p}{(\psi_{1})}(x_{0},\xi)=0.
\end{equation}
 We have also 
 $$
 H_{p}^{2}(\psi_{1}-\lambda\psi_{0}^{2})=H_{p}^{2}(\psi_{1})-\lambda H_{p}^{2}(\psi_{0}^{2})
 $$
 and since $\psi_{0}(x_{0})=0$, we get 
\begin{equation}\label{key1}
  H_{p}^{2}(\psi_{1}-\lambda\psi_{0}^{2})(x_{0},\xi)=H_{p}^{2}(\psi_{1})(x_{0},\xi)-2\lambda \bigl(H_{p}(\psi_{0})(x_{0}, \xi)\bigr)^{2}.
\end{equation}
Moreover, we have, using \eqref{999} and \eqref{222+} 
$$ \poscal{Q(x_{0})\xi}{\xi}=\poscal{Q(x_{0})\xi}{d\psi_{1}(x_{0})}=0,\quad 
\poscal{Q(x_{0})d\psi_{0}(x_{0})}{d\psi_{0}(x_{0})}<0.
$$
We can use now Lemma \ref{lem29} (with $\xi, \xi_{0}=d\psi(x_{0}), M=Q(x_{0})$)
to obtain
$$\frac12
H_{p}(\psi_{0})(x_{0},\xi)=\poscal{Q(x_{0})\xi}{d\psi_{0}(x_{0})}\not=0.
$$
We may thus define the positive number
\begin{equation}\label{2123}
m_{0}=\min_{\substack{\xi\in \mathbb S^{n-1} \text{such that }\\
\poscal{Q(x_{0})\xi}{\xi}=\poscal{Q(x_{0})\xi}{d\psi_{1}(x_{0})}=0
}}
\overbrace{2\val {\poscal{Q(x_{0})\xi}{d\psi_{0}(x_{0})}}}^{=\val{H_{p}(\psi_{0})(x_{0},\xi)}}.
\end{equation}
Since we have
$$
p(x_{0}, \xi)=\poscal{Q(x_{0})\xi}{\xi},\quad H_{p}(\psi_{0})=2\poscal{Q(x_{0})\xi}{d\psi_{0}(x)},
$$
assuming with $m_{0}$ defined in \eqref{2123}, that 
\begin{equation}\label{key2}
\lambda >\frac{\max_{\xi\in \mathbb S^{n-1}}H_{p}^{2}(\psi_{1})(x_{0},\xi)}{2 m_{0}^{2}}=\lambda_{0},
\end{equation}
we obtain from  \eqref{key1}
that,
if for $\xi\in \mathbb S^{n-1}$, we have
$p(x_{0}, \xi)=H_{p}(\psi_{1}-\lambda\psi_{0}^{2})(x_{0},\xi)=0,$ then we get
$$
H_{p}^{2}(\psi_{1}-\lambda\psi_{0}^{2})(x_{0},\xi)<0,
$$
since \eqref{key2} implies for any $\xi\in \mathbb S^{n-1}$ such that $p(x_{0},\xi)=H_{p}(\psi_{1}-\lambda \psi_{0}^{2})(x_{0},\xi)=0$,
that $H_{p}(\psi_{0})(x_{0},\xi)\not=0$ and 
$$
\lambda>\frac{H_{p}^{2}(\psi_{1})(x_{0},\xi)}{2\bigl(H_{p}(\psi_{0})(x_{0},\xi)\bigr)^{2}}
\text{ and thus }
H_{p}^{2}(\psi_{1})(x_{0},\xi)-2\lambda\bigl(H_{p}(\psi_{0})(x_{0},\xi)\bigr)^{2}<0,
$$
yielding the sought result.
\end{proof}
\section{Proof of Theorem \ref{lerner} }
\subsection{A slightly different question}
Going back to the unique continuation question that we have to solve here, we may start looking again at our Remarks \ref{warning}, \ref{rem25}.
We have indeed  a  differential inequality on some open set $\mathcal U_{0}$
\begin{equation}\label{diff+++}
\val{(Pv)(x)}\le C_{0}\bigl(\val{(\nabla_{x}v)(x)}+\val{v(x)}\bigr),
\end{equation}
where $P$ is a second-order differential operator with $C^{1}$ coefficients and
also, 
defining
$$
\psi=\psi_{1}-\lambda \psi_{0}^{2},
$$
we know from Proposition \ref{pro210} that the hypersurface $\{\psi=0\}$ is pseudo-convex with respect to $P$.
Moreover
we know that the function $v$ is vanishing on the open set $\{x\in \mathcal U_{0}, \psi(x)<0\}$
(cf. Lemma \ref{lem28})
(maybe with a smaller neighborhood of $x_{0}$ than $\mathcal U_{0}$).
\par
It seems straightforward to apply now Theorem 8.9.1 in \cite{MR0161012} to obtain that $v$ should vanish near $\{\psi=0\}$ and give a positive answer to the question raised in Remark
\ref{rem25}.
However, we have to pay attention to the regularity at our disposal for the function $v$:
we know that $v, \nabla v$ are bounded measurable functions, so in particular $v$ belongs to the Sobolev space $H^{1}_{loc}(\mathcal U_{0})$. Nevertheless our Remark
\ref{warning} and the calculation \eqref{noh2} show that the simple vanishing of $u$ at $\p\Omega$ leaves open the possibility of having
for $\nabla^{2} v$ a simple layer so that we do not know if $v$ belongs to $H^{2}_{loc}(\mathcal U_{0})$,
but only
\begin{equation}\label{lessreg}
v\in H^{1}_{loc}(\mathcal U_{0}),\quad Pv \in L^{2}_{loc}(\mathcal U_{0}).
\end{equation}
Although these assumptions should be sufficient for the classical theorem to hold,
we have some checking to perform on this matter and we need to show that the classical assumption
$v\in H^{2}_{loc}(\mathcal U_{0})$
can be weakened down to \eqref{lessreg}.
\subsection{Invariance}
Let us start with checking some easy facts. In the first place, let 
$$
P=\poscal{Q(x) \p_{x}}{\p_{x}} +b(x)\cdot \p_{x}+c(x)
$$
be a differential operator with $C^{1}$ real coefficients in the principal part
($Q$ is a $C^{1}$ symmetric matrix) and $b, c$ are bounded measurable.
Then the differential inequality is invariant by a $C^{2}$
change of coordinates. Let 
$$
\kappa:\mathcal V_{0}\rightarrow \mathcal U_{0}\quad \text{be a $C^{2}$ diffeomorphism}.
$$
With standard notations we have
\begin{align}
&\frac{\p}{\p x_{j}}=\sum_{k}\frac{\p y_{k}}{\p x_{j}}\frac{\p }{\p y_{k}}, \quad \p_{x}=\tr \kappa'(y)^{-1}\p_{y},
\\
&\kappa^{*}(P)=\poscal{\kappa'(y) Q(\kappa(y)) \tr \kappa'(y)^{-1}\p_{y}}{\p_{y}} +b(\kappa(y))\cdot \tr\kappa'(y)^{-1}\p_{y}+c(\kappa(y)),
\end{align}
and since $\kappa'(y)$ is $C^{1}$, the new matrix 
$\kappa'(y) Q(\kappa(y)) \tr \kappa'(y)^{-1}$ is still $C^{1}$ and the lower order terms are bounded measurable,
whereas the differential inequality \eqref{diff+++}
becomes with $\kappa^{*}(v)=v\circ \kappa$, denoted by $v_{\kappa}$
$$
\val{\bigl(\kappa^{*} (P)v_{\kappa}\bigr)(y)}\le C_{0}\bigl(\val{(\tr\kappa'(y)^{-1}(\nabla_{y}v_{\kappa})(y)}+\val{v_{\kappa}(y)}\bigr),
$$
implying in the $y$-coordinates an inequality of the same type as  \eqref{diff+++}.
Assuming that the hypersurface with equation $\{\psi=0\}$ is non-characteristic is also invariant as well as the regularity of $\psi\circ \kappa$ if $\psi$ and $\kappa$ are assumed to be $C^{2}$.
\par
Also the pseudo-convexity Assumption \eqref{hormander}
is invariant by change of $C^{2}$ coordinates:
let us assume that it holds in the $x$-coordinates with 
$$
p(x,\xi)=\poscal{Q(x) \xi}{\xi},\quad \text{$Q$ real symmetric $C^{1}$ matrix, $\psi\in C^{2}$  such that \eqref{hormander} holds.}
$$
We have now
$$
p_{\kappa}(y,\eta)=\poscal{\kappa'(y)^{-1}Q(\kappa(y))\tr \kappa'(y)^{-1}\eta}{\eta}=p\bigl(\kappa(y), \tr \kappa'^{-1}(y)\eta\bigr),\quad \psi_{\kappa}=\psi\circ \kappa.
$$
Let us assume that 
$$
p_{\kappa}(y_{0}, \eta)=H_{p_{\kappa}}(\psi_{\kappa})(y_{0},\eta)=0, \eta \not=0.
$$
Assuming that $Q, \kappa$ are smooth functions, we find immediately
$$
H_{p_{\kappa}}^{2}(\psi_{\kappa})=\poi{p_{\kappa}}{\poi{p_{\kappa}}{\psi_{\kappa}}}=\bigl(H_{p}^{2}(\psi)\bigr)\circ \kappa.
$$
We may regularize $Q$  and $\kappa$ to get the same result for $Q$ of class $C^{1}$, $\kappa$ a $C^{2}$ diffeomorphism, using the expression \eqref{expression}
which shows  that 
$H_{p}^{2}(\psi)$
is a quadratic form in the variable $\xi$,
with coefficients depending polynomially on 
$Q(x), \nabla _{x}Q(x), \nabla _{x}\psi, \nabla_{x}^{2}\psi$.
\subsection{Mollifiers}
Thanks to Theorems 8.6.3 and 8.3.1 in \cite{MR0161012},
the pseudo-convexity hypothesis on $\psi$ expressed by Proposition \ref{pro210}
allows us to find a smooth real-valued function $\phi$ defined on a neighborhood $\mathcal U_{0}$
of $x_{0}$ and constants $C_{0}, \lambda_{1}$
such that
$$
d\phi\not=0,\quad \{x\in \mathcal U_{0}, \phi(x)<0\}\subset\{x\in \mathcal U_{0}, \psi(x)<0\},
$$
$$
\{x\in \mathcal U_{0},\phi(x)=0, \psi(x)\ge 0\}=\{x_{0}\},
$$
so that  all $w\in \mooc(\mathcal U_{0})$, for all
$\lambda\ge \lambda_{1}$,
\begin{equation}\label{carleman+}
C_{0}\norm{e^{-\lambda \phi} P w}_{L^{2}}\ge \lambda^{1/2}\norm{e^{-\lambda \phi} \nabla  w}_{L^{2}}
+ \lambda^{3/2} \norm{e^{-\lambda \phi} w}_{L^{2}}.
\end{equation}
Let $v$ be a $H^{1}_{loc}$ function satisfying \eqref{diff++++}, which implies that 
$Pv$ belongs to $L^{2}_{loc}$, let $\chi\in \mooc(\mathcal U_{0})$. Let $\rho\in \mooc(\R^{n})$
supported in the unit ball,  
with integral 1 and let us set for $\epsilon>0$,
\begin{equation}\label{moll}
\rho_{\epsilon}(z)=\epsilon^{-n}\rho(z/\epsilon).
\end{equation}
Carleman Inequality \eqref{carleman+} implies 
$$
C_{0}\norm{e^{-\lambda \phi} P (\rho_{\epsilon}\ast \chi v)}_{L^{2}}\ge \lambda^{1/2}\norm{e^{-\lambda \phi} \nabla   (\rho_{\epsilon}\ast \chi v)}_{L^{2}}
+ \lambda^{3/2} \norm{e^{-\lambda \phi}    (\rho_{\epsilon}\ast \chi v)}_{L^{2}}.
$$
Since $\phi$ is continuous and  $\chi v$ belongs to $H^{1}_{c}(\mathcal U_{0})$ as well as $\chi v\ast \rho_{\epsilon}$
 is supported in $\mathcal U_{0}$ for $\epsilon$ small enough,
we find by standard mollifying arguments that the rhs has the limit
$$
\lambda^{1/2}\norm{e^{-\lambda \phi} \nabla    \chi v}_{L^{2}}
+ \lambda^{3/2} \norm{e^{-\lambda \phi}  \chi v}_{L^{2}}.
$$
Checking the lhs,
we see that for $\chi_{0}\in \mooc(\mathcal U_{0})$, equal to 1 on a neighborhood of the support of $\chi$
(so that $\chi v\ast \rho_{\epsilon}$
 is supported in   
 $\{\chi_{0}=1\}$ for $\epsilon$ small enough)
 we find 
 $$
P (\rho_{\epsilon}\ast \chi v)=\chi_{0}P\hat \rho(\epsilon D) \chi v=[\chi_{0}P, \hat \rho(\epsilon D)] \chi v+\hat \rho(\epsilon D) \chi_{0}Pv.
$$
The term $\hat \rho(\epsilon D) \chi_{0}Pv$ goes to $\chi_{0}Pv$ in $L^{2}$ when $\epsilon$ goes to 0, since $Pv\in L^{2}_{loc}$.
We need to prove that the first term goes to 0 in $L^{2}$.
Proving this
will mean that  \eqref{carleman+} holds for $w=\chi v$ and unique continuation will follow
via standard arguments.
For that purpose, let us state and prove the following lemma.
\begin{lemma}
 Let $a\in C^{1}_{c}(\R^{n})$, let $v\in H^{1}_{c}(\R^{n})$. Then, with $\rho$ as above, we have
 $$
 \lim_{\epsilon\rightarrow 0_{+}} \Bigl(a \p_{x}^{2}(\rho_{\epsilon}\ast v)-\rho_{\epsilon}\ast\bigl( a\p_{x}^{2}v\bigr)\Bigr)=0,\quad\text{in $L^{2}(\R^{n})$}, 
 $$
 where $\p_{x}^{2} b$ stands for the Hessian matrix of $b$.
\end{lemma}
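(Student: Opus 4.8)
The plan is to reduce the statement to the Friedrichs commutator lemma after transferring one derivative out of the commutator. Write $T_{\epsilon}w=\rho_{\epsilon}\ast w$. Since convolution commutes with differentiation and, for $v\in H^{1}_{c}(\R^{n})$, the distribution $\p_{j}\p_{k}v$ has compact support, one has $\p_{j}\p_{k}(T_{\epsilon}v)=T_{\epsilon}(\p_{j}\p_{k}v)$; hence the $(j,k)$ entry of $a\,\p_{x}^{2}(\rho_{\epsilon}\ast v)-\rho_{\epsilon}\ast(a\,\p_{x}^{2}v)$ equals $a\,T_{\epsilon}(\p_{j}\p_{k}v)-T_{\epsilon}\bigl(a\,\p_{j}\p_{k}v\bigr)=[a,T_{\epsilon}]\p_{j}g$, where $g=\p_{k}v$ is an $L^{2}$ function with compact support. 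It is thus enough to prove: for every $a\in C^{1}_{c}(\R^{n})$ and every $g\in L^{2}(\R^{n})$ with compact support, $[a,T_{\epsilon}]\p_{j}g\to 0$ in $L^{2}(\R^{n})$ as $\epsilon\to 0_{+}$.

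The first step is an $\epsilon$-uniform bound $\norm{[a,T_{\epsilon}]\p_{j}g}_{L^{2}}\le C\norm{a}_{C^{1}}\norm{g}_{L^{2}}$. Using the Leibniz rule in the sense of distributions (everything being a genuine function after convolution with $\rho_{\epsilon}$), one gets $[a,T_{\epsilon}]\p_{j}g=\p_{j}\bigl([a,T_{\epsilon}]g\bigr)-[\p_{j}a,T_{\epsilon}]g$; the last term is bounded by $2\norm{\p_{j}a}_{L^{\infty}}\norm{g}_{L^{2}}$, and for the first one computes explicitly $\p_{x_{j}}\bigl([a,T_{\epsilon}]g\bigr)(x)=(\p_{j}a)(x)(T_{\epsilon}g)(x)+\int(\p_{j}\rho_{\epsilon})(x-y)\bigl(a(x)-a(y)\bigr)g(y)\,dy$. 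The first summand is controlled by $\norm{\p_{j}a}_{L^{\infty}}\norm{g}_{L^{2}}$, and for the integral one uses $\Val{a(x)-a(y)}\le\norm{\nabla a}_{L^{\infty}}\Val{x-y}$ together with the scaling identity $\int\Val{z}\,\Val{(\p_{j}\rho_{\epsilon})(z)}\,dz=\int\Val{w}\,\Val{(\p_{j}\rho)(w)}\,dw$, a finite constant independent of $\epsilon$; Young's inequality then bounds the integral term in $L^{2}$ by $C\norm{\nabla a}_{L^{\infty}}\norm{g}_{L^{2}}$.

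The second step is the convergence itself. It is immediate when $g\in\mooc(\R^{n})$: then $[a,T_{\epsilon}]\p_{j}g=a\,(T_{\epsilon}\p_{j}g)-T_{\epsilon}(a\,\p_{j}g)$ tends uniformly to $a\,\p_{j}g-a\,\p_{j}g=0$ (since $T_{\epsilon}h\to h$ uniformly on compact sets for continuous $h$) and the supports stay inside a fixed compact set for $\epsilon\le 1$, so the limit also holds in $L^{2}$. For a general compactly supported $g\in L^{2}(\R^{n})$ and $\delta>0$, choose $g_{\delta}\in\mooc(\R^{n})$ supported in a fixed neighborhood of $\supp g$ with $\norm{g-g_{\delta}}_{L^{2}}<\delta$; then, by the uniform bound, $\norm{[a,T_{\epsilon}]\p_{j}g}_{L^{2}}\le\norm{[a,T_{\epsilon}]\p_{j}g_{\delta}}_{L^{2}}+C\norm{a}_{C^{1}}\delta$, and letting $\epsilon\to 0_{+}$ and then $\delta\to 0$ yields the conclusion.

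The only genuine obstacle is the uniform bound: at first sight $a\,\p_{x}^{2}(\rho_{\epsilon}\ast v)$ loses one derivative relative to the $H^{1}$ regularity of $v$, and what rescues the estimate is precisely that, once one derivative is moved past the commutator, the remaining kernel $z\,(\p_{j}\rho_{\epsilon})(z)$ has an $L^{1}$ norm that stays bounded as $\epsilon\to 0$. The rest — convergence on the dense class $\mooc(\R^{n})$ and the density argument — is routine.
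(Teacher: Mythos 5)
Your proof is correct, but it follows a different architecture from the paper's. The paper never invokes a dense subclass: it expands both terms explicitly, integrates by parts in $\rho_{\epsilon}\ast(a\,\p_{x}^{2}v)$ to move one derivative off $\nabla^{2}v$, and then computes the limit of each resulting piece directly for the given $v\in H^{1}_{c}$, using the precise moment identity $\int z_{k}\,\p_{j}\rho(z)\,dz=-\delta_{jk}$ together with a modulus of continuity for $\nabla a$ and the $L^{2}$ convergence of approximate identities; the three limits cancel. You instead reduce the $(j,k)$ entry to $[a,T_{\epsilon}]\p_{j}g$ with $g=\p_{k}v\in L^{2}$ and run the classical Friedrichs commutator scheme: an $\epsilon$-uniform operator bound (via $[a,T_{\epsilon}]\p_{j}=\p_{j}[a,T_{\epsilon}]-[\p_{j}a,T_{\epsilon}]$ and the scaling-invariant $L^{1}$ norm of $z\,\nabla\rho_{\epsilon}(z)$, which is the same key fact the paper uses), then convergence on $\mooc$ and density. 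What your route buys: you never need the exact value of $\int z_{k}\p_{j}\rho\,dz$ nor the uniform-continuity argument for $\nabla a$, the cancellation is checked only on smooth functions, and the uniform bound makes the argument robust (it would survive, e.g., $a$ merely Lipschitz). What the paper's route buys: it identifies the limits of the individual terms (each tending to $\pm\nabla a\,\nabla v$) without any approximation of $v$, which is slightly more self-contained. Two cosmetic points in your write-up: since $\rho$ is only assumed to have integral $1$ (not to be nonnegative), the bounds on $[\p_{j}a,T_{\epsilon}]g$ and on $(\p_{j}a)\,T_{\epsilon}g$ should carry the harmless factor $\norm{\rho}_{L^{1}}$; and it is worth saying explicitly that $a\,\p_{j}\p_{k}v$ is meaningful as the compactly supported $H^{-1}$ distribution $\p_{j}(a\,\p_{k}v)-(\p_{j}a)\p_{k}v$, so that the commutator identity you start from is an identity of distributions.
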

\noindent
{\it N.B.} The function $\p_{x}^{2}(\rho_{\epsilon}\ast v)$ is smooth compactly supported (thus in $L^{2}$)
and bounded in $H^{-1}$ when $\epsilon$ goes to 0.
As a result the product $a\p_{x}^{2}(\rho_{\epsilon}\ast v)$ is $C^{1}_{c}$ and  bounded in $H^{-1}$.
On the other hand $a\p_{x}^{2} v$ belongs to $H^{-1}_{c}$ and thus $\rho_{\epsilon}\ast (a\p_{x}^{2} v)$
is smooth compactly supported and bounded in $H^{-1}$.
\begin{proof}
 We have with obvious notations, for $v\in H^{1}_{c}$, 
\begin{align*}
D_{\epsilon}(v)&=\bigl( a \p_{x}^{2}(\rho_{\epsilon}\ast v)\bigr)(x)-\bigl(\rho_{\epsilon}\ast\bigl( a\p_{x}^{2}v\bigr)\bigr)(x)
\\&=
a(x)\int (\nabla \rho_{\epsilon})(x-y)(\nabla v)(y) dy 
-\overbrace{\int\underbrace{ \rho_{\epsilon}(x-y) a(y)}_{\in H^{1}(y)}\underbrace{\nabla^{2} v(y) }_{\in H^{-1}(y)}dy }^{\text{a bracket of duality}}
\\&=
\int a(x)(\nabla \rho_{\epsilon})(x-y)(\nabla v)(y) dy 
-\int (\nabla \rho_{\epsilon})(x-y) a(y)\nabla v(y) dy
\\&\hskip195pt+ \int \rho_{\epsilon}(x-y) \nabla a(y)\nabla v(y) dy.
\end{align*}
The very last term has limit  (the matrix)
$\nabla a\nabla v $ in $L^{2}(\R^{n})$.
Defining the first two terms as $\tilde D_{\epsilon}(v)$, we get 
\begin{align*}
\tilde D_{\epsilon}(v)&=\int\bigl(a(x)-a(y)\bigr)(\nabla \rho_{\epsilon})(x-y)\nabla v(y) dy\\
&=\int_{0}^{1}\int\nabla a(y+\theta(x-y))\cdot (x-y)(\nabla \rho_{\epsilon})(x-y)\nabla v(y) dyd\theta\\
&=\int_{0}^{1}\int\nabla a(y+\theta(x-y))\cdot \frac{(x-y)}{\epsilon}(\nabla \rho)_{\epsilon}(x-y)\nabla v(y) dy d\theta\\
&=\int_{0}^{1}\int\nabla a(x-\epsilon(1-\theta)z)\cdot z(\nabla \rho)(z)\nabla v(x-\epsilon z) dz d\theta.
\end{align*}
The function
$z_{k}{\p \rho}/{\p z_{j}}$ is smooth compactly supported with integral
$-\delta_{j,k}$
and thus 
\begin{align*}
\lim_{\epsilon\rightarrow 0_{+}}&
\int_{0}^{1}\int\p_{k}a(x-\epsilon(1-\theta)z)\cdot z_{k}(\p_{j} \rho)(z)\p_{l} v(x-\epsilon z) dz d\theta
\\
&=\lim_{\epsilon\rightarrow 0_{+}}\Bigl\{\int_{0}^{1}\int\bigl(\p_{k}a(x-\epsilon(1-\theta)z)-
\p_{k}a(x)
\bigr)\cdot z_{k}(\p_{j} \rho)(z)\p_{l} v(x-\epsilon z) dz d\theta
\\
&\hskip200pt+(\p_{k}a)(x)
\int z_{k}(\p_{j} \rho)(z)\p_{l} v(x-\epsilon z) dz \Bigr\}
\\
&=-(\p_{j}a)(x) (\p_{l} v)(x), \quad \text{in $L^{2}(\R^{n})$,}
\end{align*}
since
$$
\val{\p_{k}a(x-\epsilon(1-\theta)z)-
\p_{k}a(x)}\le \omega(\epsilon\val{z}),
$$
where $\omega$ is a modulus of continuity for $\nabla a$;
we have also used that for $\tilde\rho\in C^{0}_{c}(\R^{n})$ and $\tilde\rho_{\epsilon}$ defined by \eqref{moll},
we have for $w\in L^{2}(\R^{n})$ that $\tilde\rho_{\epsilon}\ast w$ belongs to $L^{2}(\R^{n})$
and 
$$
\lim_{\epsilon\rightarrow 0_{+}}(\tilde\rho_{\epsilon}\ast w)=J(\tilde \rho)w
\quad \text{in $L^{2}(\R^{n})$,\quad with $J(\tilde \rho)=\int_{\R^{n}}\tilde\rho(z) dz$.}
$$
As a result $\tilde D_{\epsilon}(v)$
 has limit $\nabla a\nabla v$ in $L^{2}(\R^{n})$, proving that 
 $D_{\epsilon}(v)$ has limit 0 in $L^{2}(\R^{n})$.
 \end{proof}

\end{document}